\newcommand{\killproofname}{\unskip\nopunct}}
\newcommand{\killproofname}[1]{\unskip\aftergroup\ignorespaces\ignorespaces}}
\newtheorem{thm}{Theorem}[section]
\newtheorem{lem}[thm]{Lemma}
\newtheorem{asmp}[thm]{Assumption}
\newtheorem{exmp}{Example}
\newtheorem{defn}[thm]{Definition}
\newtheorem{rem}[thm]{Remark}
\newcounter{mycounter}
\newcommand{\RNum}[1]{\uppercase\expandafter{\romannumeral #1\relax}}
\newcommand{\algorithmfootnote}[2][\footnotesize]{%
  \let\old@algocf@finish\@algocf@finish
  \def\@algocf@finish{\old@algocf@finish
    \leavevmode\rlap{\begin{minipage}{\linewidth}
    #1#2
    \end{minipage}}%
  }%
}
\newcommand{\scriptveryshortarrow}[1][3pt]{{%
    \hbox{\rule[\scriptratio\dimexpr\fontdimen22\textfont2-.2pt\relax]
               {\scriptratio\dimexpr#1\relax}{\scriptratio\dimexpr.4pt\relax}}%
   \mkern-4mu\hbox{\let\f@size\sf@size\usefont{U}{lasy}{m}{n}\symbol{41}}}}
\begin{document}
	
\title{Distributed Online Convex Optimization with Improved Dynamic Regret}

\author{Yan Zhang,~\IEEEmembership{Student Member,~IEEE,}
	Robert J. Ravier~\IEEEmembership{Member,~IEEE,} Vahid~Tarokh,~\IEEEmembership{Fellow,~IEEE}, and  \\Michael M. Zavlanos,~\IEEEmembership{Senior Member,~IEEE}
	\thanks{Yan Zhang and Michael M. Zavlanos are with the Department
		of Mechanical Engineering and Material Science, Duke University, Durham,
		NC, 27708, USA e-mail: \{yan.zhang2, michael.zavlanos\}@duke.edu.Robert J. Ravier and Vahid Tarokh are with Department of Electrical and Computer Engineering, Duke University, Durham, NC, 27708, USA e-mail: \{robert.ravier, vahid.tarokh\}@duke.edu. This work is supported in part by AFOSR under award number FA9550-19-1-0169 and by DARPA under grant number FA8650-18-1-7837.}}

\markboth{IEEE TRANSACTIONS ON AUTOMATIC CONTROL}%
{Shell \MakeLowercase{\textit{et al.}}: Bare Demo of IEEEtran.cls for IEEE Journals}

\maketitle

\begin{abstract}
In this paper, we consider the problem of distributed online convex optimization, where a group of agents collaborate to track the global minimizers of a sum of time-varying objective functions in an online manner. 
Specifically, we propose a novel distributed online gradient descent algorithm that relies on an online adaptation of the gradient tracking technique used in static optimization. We show that the dynamic regret bound of this algorithm has no explicit dependence on the time horizon and, therefore, can be tighter than existing bounds especially for problems with long horizons. Our bound depends on a new regularity measure that quantifies the total change in the gradients at the optimal points at each time instant. Furthermore, when the optimizer is approximatly subject to linear dynamics, we show that the dynamic regret bound can be further tightened by replacing the regularity measure that captures the path length of the optimizer with the accumulated prediction errors, which can be much lower in this special case. We present numerical experiments to corroborate our theoretical results.
\end{abstract}

\begin{IEEEkeywords}
	Online convex optimization, distributed optimization, dynamic regret, gradient tracking.
\end{IEEEkeywords}

\section{Introduction}
Distributed optimization has recently received considerable attention, particularly due to its wide applicability in the areas of control and learning \cite{lee2017distributed, rabbat2004distributed, nedic2017fast}. The goal is to decompose large optimization problems into smaller, more manageable subproblems that are solved iteratively and in parallel by a group of communicating agents. As such, distributed algorithms avoid the cost and fragility associated with centralized coordination, and provide better privacy for the autonomous decision makers. Popular distributed optimization methods in the literature include distributed subgradient methods \cite{nedic2009distributed, lee2017approximate}, dual averaging methods \cite{duchi2011dual}, and augmented Lagrangian methods \cite{chatzipanagiotis2015augmented, chatzipanagiotis2015distributed, chatzipanagiotis2017convergence, zhang2018consensus}.

The above distributed optimization methods usually assume a static objective function. Nevertheless, in practice, objectives can be time-varying. Time-varying objectives frequently appear in online learning, where newly observed data result in new objectives to minimize, and in distributed tracking, where the objective is to accurately follow the time-varying states of the targets of interest (e.g. positions and velocities) \cite{duchi2011adaptive, shahrampour2018distributed}.
These problems can be solved using online optimization algorithms that update the decisions using real-time streaming data, in contrast to their off-line counterparts that first collect problem data and then use them for decision making.

The performance of online optimization algorithms is typically measured using notions of regret. Depending on the problem setting, different notions of regret have been proposed in the literature. For example, static regret, which measures the additional loss caused by the online optimization algorithm compared to the offline optimizer assuming all loss functions are known in hindsight, is used when the parameter that is estimated is assumed to be time invariant, as in online learning \cite{duchi2011adaptive}. 
The static regret of online gradient descent algorithms has been extensively studied in the literature; see, e.g., \cite{duchi2011adaptive, zinkevich2003online,hazan2007logarithmic, chiang2012online}.
For general convex problems, it has been shown that a sublinear regret rate $O(\sqrt{T})$ can be achieved \cite{zinkevich2003online}, which can be improved to $O(\log(T))$ assuming strong convexity \cite{hazan2007logarithmic}. The work in \cite{tatarenko2018minimizing} extends these results to zeroth-order methods. Unconstrained distributed online gradient descent algorithms are studied in \cite{hosseini2016online, mateos2014distributed, akbari2015distributed, cesa2019cooperative}. These methods deal with unconstrained problems and still achieve sublinear regret rates provided the stepsizes are chosen appropriately and the network of agents is connected. 
To handle constrained online optimization problems, the approaches in \cite{lee2017sublinear,yuan2017adaptive,paternain2019distributed} employ distributed online saddle point algorithms and show that they achieve the same regret rate as for unconstrained problems.

\begin{table*}[!htb]
	\caption{Bounds on the Dynamic Regret of Distributed Online Optimization Algorithms \protect \footnotemark } \label{tab:DynamicRegretBounds}
	\setlength\tabcolsep{0pt} 
	\begin{small}
		\begin{tabular*}{\textwidth}{@{\extracolsep{\fill}}ccccc}
			\toprule[.15em]
			& Problem setting & Number of Updates per Sample & Upper Bound on Stepsize & Regret Bound \\
			\midrule[.05em]
			\cite{shahrampour2018distributed} & Constrained & 1 & $O(\sqrt{\frac{\mathcal{P}_T^A}{T}})$ & $O(\sqrt{T (1 + \mathcal{P}_T^A)})$ \\ 
			\midrule[.05em]
			\cite{lu2019online} & Constrained & 1 & $O(\frac{1}{t})$ & $O( \sqrt{1 + \mathcal{P}_T} T^{3/4} \sqrt{\ln T})$ \\ 			
			\midrule[.05em]
			\cite{zhao2019decentralized} & Unconstrained & 1 & $O(\sqrt{\frac{\mathcal{P}_T}{T}})$ & $O(\sqrt{T (1 + \mathcal{P}_T)}+ \mathcal{P}_T)$ \\ 			
			\midrule[.05em]
			\cite{nazari2019dadam} & Constrained & 1 & $O(\frac{1}{t})$ & $O(\sqrt{1 + \log T} G_T + \sqrt{T} \mathcal{P}_T)$ \\ 	
			\midrule[.05em]
			\cite{dixit2019online_b} & Constrained & $K$ & $O(1)$ & $O(\log(T) (1 + \mathcal{P}_T))$ \\ 	
			\midrule[.05em]
			This work & Unconstrained & $1$ & $O(1)$ & $O( 1 + \mathcal{P}_T^A + \mathcal{V}_T^A)$ \\							
			\bottomrule[.15em]
		\end{tabular*}
	\end{small}
\end{table*}

Dynamic regret is a more appropriate performance measure when the underlying parameter of interest is time-varying. Dynamic regret compares the loss incurred by the online algorithm to the optimal loss incurred by the sequence of optimizers that minimize the objective functions at each time step separately. The dynamic regret of centralized online gradient descent algorithms is studied in \cite{zinkevich2003online, hall2015online,besbes2015non, jadbabaie2015online,dixit2019online}. In contrast to online optimization problems with time invariant parameters and static regret methods, sublinear dynamic regret rates $O(\sqrt{T})$ can not be achieved here; rather, the growth of dynamic regret depends on the regularity measures associated with the time-varying problem \cite{besbes2015non}. 
These measures can be related to the rate of change of the function values or minimizers over time \cite{jadbabaie2015online}
\begin{equation*}
\mathcal{P}_{T}:= \sum_{t=0}^{T} \|{x_{t+1}^{\ast} - x_{t}^\ast}\|.
\end{equation*}
Dynamic regret methods for constrained online optimization problems are studied in \cite{chen2017online}. All these methods focus on centralized optimization problems. Distributed online gradient descent algorithms for unconstrained or set-constrained problems are proposed in \cite{shahrampour2018distributed,lu2019online, zhao2019decentralized,nazari2019dadam,dixit2019online_b}, while \cite{yi2019distributed} analyzes the dynamic regret of time-varying problems with coupled explicit contraint functions.

\footnotetext{
In Table~\ref{tab:DynamicRegretBounds}, the constants hidden in the big $O$ notation only depend on the Lipschitz and strong convexity constants of the objective functions, the network mixing rate parameter $\sigma_W$ and the size of the network $n$, which are all independent of the problem horizon $T$ and regularity measures $\mathcal{P}_T$ or $\mathcal{P}_T^A$. The measure $\mathcal{P}_T^A$ extends $P_T$ to the case where the optimizers of the time-varying objective functions are known to follow noisy linear dynamics, and it is later defined in \eqref{eqn:PathLength}. It can be reduced to the measure $\mathcal{P}_T$ when the matrix $A$ is identity. The measure $\mathcal{V}_T^A$ that appears in our regret bounds in Table~\ref{tab:DynamicRegretBounds} is defined similarly.}
The dynamic regret bounds provided in the above literature are listed in Table~\ref{tab:DynamicRegretBounds}.
Note that all existing bounds explicitly depend on the problem horizon $T$, which makes them loose especially for large horizons. To see this, consider the special case of a static problem where $\mathcal{P}_T = 0$. Then, the regret bounds in above works still grow with respect to $T$. 
It is thus of theoretical and practical interest to investigate whether these regret bounds can be tightened by removing their dependency on problem horizon $T$. The works in \cite{mokhtari2016online,yang2016tracking,ajalloeian2020inexact} have shown that this is possible for centralized problems assuming strong convexity of the objective funtion. 
Similarly, \cite{zhang2017improved} removes dependency of the dynamic regret on the time horizon under less restrictive assumptions.
For distributed problems, the dependence on $T$ was removed in \cite{ZhangCDC2019_OnlineOptimization} provided that the sum of the local objective functions was strongly convex, though the regret bound achieved depends on the gradient path-length regularity measured by the sup-norm of the time difference of gradients
\begin{equation*}
\mathcal{V}_{T,\sup} := \sum_{t=0}^{T} \max_{x \in \mathbb{R}^d} || g_{t+1}(\mathbf{1} \otimes x) - g_{t}(\mathbf{1} \otimes x) ||,
\end{equation*}
similar to \cite{chiang2012online,jadbabaie2015online, dixit2019online}, where $g_t$ denotes the gradients of the local objective functions at time $t$. It is clear that this term can become quite large, and it is thus of interest to explore whether the regret bound can be further improved.
In this work, we propose a novel distributed online gradient algorithm that employs gradient tracking \cite{qu2018harnessing,pu2020distributed,shi2015extra,xi2017dextra}, as in \cite{ZhangCDC2019_OnlineOptimization}, and show that the dynamic regret of our algorithm can be bounded without explicit dependence on $T$ and with the regularity measure that depends on the change of the gradients at the optimal points

\begin{equation*}
\mathcal{V}_T := \sum_{t=0}^{T} || g_{t+1}(\mathbf{1} \otimes x_{t}^\ast) - g_{t}(\mathbf{1} \otimes x_{t}^\ast) ||.
\end{equation*}
It is clear that this regularity measure is tighter than $\mathcal{V}_{T,\sup}$ used in\cite{ZhangCDC2019_OnlineOptimization}. 
Comparing to the regret bounds in \cite{shahrampour2018distributed,lu2019online, zhao2019decentralized,nazari2019dadam,dixit2019online_b}, our bound is tighter when $\mathcal{V}_T$ is in the same order as $\mathcal{P}_T$, $\mathcal{P}_T$ 
is in the order $o(T)$, and the horizon $T$ is long.
In addition, the dynamic regret bound of our proposed algorithm can be achieved by selecting the stepsize without knowledge of the problem horizon $T$ and regularity measure $\mathcal{P}_T$, in contrast to the stepsize rules in \cite{shahrampour2018distributed,zhao2019decentralized}. And it only requires a single update per sample of the time-varying objective function, while $K$ updates per sample are required in \cite{dixit2019online_b}.
 

The performance of our proposed algorithm can be further improved in the special case where the optimal points of the time-varying objective functions follow a noisy linear dynamical system, as in \cite{shahrampour2018distributed}. If an estimator of this dynamical system is available, it can be directly incorporated in our algorithm to improve the dynamic regret bounds, provided that this estimator is sufficiently accurate. 
Specifically, we show that when the optimizers are subject to known linear dynamics, the proposed prediction step allows to reduce the regularity measuring the path-length of the optimizers to the prediction error, similar to the analysis in \cite{chen2016using, ravier2019prediction}.  
Knowledge of the optimizer dynamics is generally not possible in adversarial settings as
in \cite{bubeck2012regret}, where an adversary agent selects the objective function to reveal to the agent at the next time step, but is it possible, e.g., in distributed tracking problems as those considered in \cite{shahrampour2018distributed,simonetto2017decentralized}.
Specifically, in \cite{simonetto2017decentralized}, a distributed prediction-correction scheme is studied, where the time-varying objective functions  evolve in continuous time and the prediction steps are conducted using second-order derivative information. The tracking performance guarantees in \cite{simonetto2017decentralized} require bounded third-order derivatives of the objective functions.
In contrast, we do not require second-order information to conduct prediction, and our analysis is based on weaker assumptions. Specifically, we only require that the change of the objective functions over time is bounded, which can be satisfied even when the time derivative of the objective function does not exist.

The rest of this paper is organized as follows. Section II formulates the distributed online optimization problem under consideration. Section III presents the proposed algorithm and develops theoretical results that characterize its dynamic regret. We present numerical experiments in Section IV and make concluding remarks in Section V.


\section{Preliminaries and Problem Definition}
\subsection{Problem Definition}

We consider the time-varying optimization problem:

\begin{equation} \label{eqn:ProblemForm}
	\min_{x_t} f_t(x_t),
\end{equation}
where $x_{t} \in \mathbb{R}^d$, and $f_t(x_t) = \sum_{i=1}^{n} f_{i,t}(x_t)$ is a sum of local loss functions $f_{i,t}(x_t)$ assigned to a group of $n$ agents that are tasked with solving problem~\eqref{eqn:ProblemForm}. The loss functions $f_{i,t}$ are assumed to be time-varying and are not revealed to the agents until each agent has made its decision $x_{i,t} \in \mathbb{R}^d$ at time $t$. The local loss function $f_{i,t}(x_{i,t})$ can only be observed by agent $i$, thus requiring communication among the agents in order to solve problem~\eqref{eqn:ProblemForm}.
Due to the fact that the objective function $f_{i,t}(x)$ is revealed to the agents in an online and distributed fashion, and only one update is allowed between each sample of the time-varying objective function, it is unlikely to find the exact optimizer $x_t^\ast := \arg \min_{x_t \in \mathbb{R}^d} f_t(x_t)$ at each time instant. Instead, it is desriable to design an online distributed optimization algorithm to track the time-varying optimizer $x_t^\ast$, so that each agent can gradually find local estimates $x_{i,t}$ so that $x_{i,t}$ (or $f_t(x_{i,t})$) is close to $x_t^\ast$ (or $f_t(x_{t}^\ast)$).

\subsection{Regret Measures and Assumptions}
In online optimization, the main performance measures of interest over a time horizon $0 \leq t \leq T$ are notions of regret.
Two regret measures commonly considered in the literature are static regret and dynamic regret. 

\begin{defn} (Static Regret)
	Given the sequence of the local decisions $x_{i,t}$ of an online distributed optimization algorithm, the static regret is defined as
	
	\begin{equation}\label{eqn:static}
	R_{T}^{s} := \frac{1}{n} \sum_{i=1}^{n} \sum_{t=0}^{T} \big( f_{t}(x_{i,t}) - \min_{x} \sum_{t=0}^{T} f_{t}(x) \big)
	\end{equation}
	\noindent and captures the performance of the algorithm with respect to the best fixed-point in hindsight.
\end{defn}

In this work, we are primarily interested in dynamic regret.

\begin{defn} (Dynamic Regret)
	Given the sequence of the local decisions $x_{i,t}$ of an online distributed optimization algorithm, the dynamic regret is defined as
	
	\begin{equation}\label{eqn:dynamic}
		R_{T}^{d} := \frac{1}{n} \sum_{i=1}^{n} \sum_{t=0}^{T} f_{t}(x_{i,t}) - \sum_{t=0}^{T} \min_{x_{t}}  f_{t}(x_{t})
	\end{equation}
	\noindent that captures the performance of the algorithm with respect to the optimal loss induced by the time-varying sequence of optimizers $\{x_t^\ast\}$.
\end{defn}
In constrast to the static regret, where sublinear regret rates $O(\sqrt{T})$ or $O(\log(T))$ can be achieved in \cite{duchi2011adaptive, zinkevich2003online,hazan2007logarithmic, chiang2012online}, the upper bounds for the dynamic regret do not only depend on the horizon $T$, but also on certain regularity properties of the time-varying problem \cite{besbes2015non}.
We first make the following assumption on the underlying dynamics of the time-varying problem.
\begin{asmp} \label{assum:DynamicalMatrix}
	The sequence of optimizers $x_{t}^{\ast}$ is subject to the following stable, noisy linear dynamical system
	
	\begin{equation}\label{eqn:LinearDynamics}
	x_{t+1}^\ast = A x_t^\ast + w_t,
	\end{equation}
	\noindent where $\|A\| \leq 1$ and $w_{t}$ is a noise term uniformly bounded by a constant $C_w$ over time, i.e.,
	
	\begin{equation}\label{eqn:BoundNoise}
	\|w_t\| \leq C_{w}, \text{ for all } t. 
	\end{equation}
\end{asmp}
We note here that Assumption~\ref{assum:DynamicalMatrix} is general enough to also capture the case where the optimizers are subject to unknown nonlinear dynamics. In this case, we can simply set $A$ to be the identity matrix and $w_t = x_{t+1}^\ast - x_t^\ast$ so that Assumption~\ref{assum:DynamicalMatrix} is satisfied. 
According to Assumption~\ref{assum:DynamicalMatrix}, when an estimate of the dynamical matrix $A$ is available, we can predict the future optimizer up to a prediction error $w_t$, whose magnitude is much smaller than $\|x_{t+1}^\ast - x_t^\ast\|$. Later in Theorem~\ref{thm:BoundRegret}, we show that in this case we can achieve a lower regret bounded by the accumulated prediction errors $\|w_t\|$ rather than the path length of the optimizers $\|x_{t+1}^\ast - x_t^\ast\|$. Now let $g_t([x_{1,t}^T, \dots, x_{n,t}^T]^T) = [\nabla f_{1,t}(x_{1,t})^T, \dots, \nabla f_{n,t}(x_{n,t})^T]^T$. In what follows, we also assume that the variation of the gradients at the optimizers is uniformly bounded over time.
	
\begin{asmp} \label{assum:BoundedTimeVaryingFunction}
	There exists a constant $C_g$ such that the objective functions gradients satisfy
	
	\begin{equation}\label{eqn:BoundedChangeGradient}
	\|g_{t+1}(\mathbf{1}\otimes Ax_t^\ast) - g_{t}(\mathbf{1}\otimes x_t^\ast)\| \leq C_g,
	\end{equation}
	\noindent for all $t$. In addition, the gradients of local objective functions $f_{i,t}(x)$ are uniformly bounded at $x_t^\ast$ for all time $t$. 
\end{asmp}	

Given the above assumptions on the time-varying problem, we consider regularity measures: the path length of the optimizer with prediction
\begin{equation} \label{eqn:PathLength}
\mathcal{P}_{T}^A:= \sum_{t=0}^{T} \|{x_{t+1}^{\ast} - Ax_{t}^\ast}\|, 
\end{equation}
and the path length of the gradient variation

\begin{equation} \label{eqn:GradPathLength}
\mathcal{V}_T^A := \sum_{t=0}^{T} || g_{t+1}(\mathbf{1} \otimes Ax_{t}^\ast) - g_{t}(\mathbf{1} \otimes x_{t}^\ast) ||.
\end{equation}
The measure $\mathcal{P}_{T}^A$ is a generalization of the commonly used regularity measure in \cite{zinkevich2003online}, the path length of the optimizer. To see this, when no prediction is used, i.e. $A$ is the identity matrix, $\mathcal{P}_{T}^A$ is reduced to the path length traveled by the optimizer, same as in \cite{zinkevich2003online}. When prediction is used, $\mathcal{P}_{T}^A$ accumulates the prediction error. The gradient variation measure we propose in \eqref{eqn:GradPathLength} is novel and different from the one based on the sup-norm considered in \cite{chiang2012online, jadbabaie2015online, dixit2019online,ZhangCDC2019_OnlineOptimization}:

\begin{equation}
\mathcal{V}_T := \sum_{t=0}^{T} \max_{x \in \mathbb{R}^d} || g_{t+1}(\mathbf{1} \otimes x) - g_{t}(\mathbf{1} \otimes x) ||.
\end{equation}
It is easy to see that the individual terms in the sum, and hence the entire quantity, can become arbitrarily large in general; the case of a quadratic objective function provides a natural example.

In the remainder of the paper, we make the following standard assumptions on the objective functions and their gradients.
%
\begin{asmp} \label{assum:LipschitzGradient}
	For all $i$ and $t,$ the gradient of the function $f_{i,t}$ is $L_{g}$-smooth, i.e. there exists a constant $L_{g} > 0$ such that 
	
	\begin{equation*}
	\| \nabla f_{i,t} (x) - \nabla  f_{i,t} (y) \| \leq L_g \|x - y\|, \text{ for all } x, y.
	\end{equation*}
\end{asmp}
As in \cite{mokhtari2016online}, we make the following assumption on strong convexity of the objective functions $f_t$.
\begin{asmp} \label{assum:StrongConvexity}
	For all $t,$ the function $f_{t}$ is $\mu$-strongly convex, i.e. there exists a constant $\mu > 0$ such that, for all $x$ and $y,$ we have:
	
	\begin{equation*}
	\nabla f_{t} (y) \geq  f_{t}(x) + \nabla f_{t}(x)^{T}(y-x) + \frac{\mu}{2} \| y-x \|^2.
	\end{equation*}
\end{asmp}
\noindent It is important to note that Assumption~\ref{assum:StrongConvexity} only requires that the {\emph{global}} loss function, $f_t(x)$, is strongly convex; the local loss $f_{i,t}$ needs not. In fact, each local loss function does not even need to be convex. Such cases could occur if the local objective function of one agent was strongly convex at each time and those of the remaining agents summed together to form a convex function, as in \cite{gade2016distributed}.
Note also that in existing literature in online optimization, e.g., \cite{mokhtari2016online}, the objective function is usually assumed to be Lipschitz or have uniformly bounded gradients, which usually requires the decision set to be compact. For unconstrained online optimization problems, this assumption is difficult to justify for some common objectives, e.g., quadratic functions. For the first time, we show that the Lipschitzness assumption holds for unconstrained online optimization problems at all the iterates of the online algorithm for common objective functions, as long as the time-varying objective functions satisfy Assumptions~\ref{assum:DynamicalMatrix}, \ref{assum:BoundedTimeVaryingFunction}, \ref{assum:LipschitzGradient} and \ref{assum:StrongConvexity}. 

In what follows, we assume that the agents tasked with solving problem~\eqref{eqn:ProblemForm} communicate subject to the graph $\mathcal{G} := (\mathcal{N}, \mathcal{E})$, where $\mathcal{N} = \{1, 2, \dots, N\}$ is the set of nodes indexed by the agents and $\mathcal{E}$ is the set of edges. If $(i, j) \in \mathcal{E}$, agent $i$ can receive information from its neighbor $j$. Moreover, we define by $W_{ij}$ the $i,j$-th entry of $W$ that captures the weight agent $i$ allocates to the information received from its neighbor $j$.  $W_{ij} = 0$ if $(i,j) \notin \mathcal{E}$. We make the following assumptions on the graph $\mathcal{G}$ and the weight matrix $W$.
\begin{asmp}
	\label{assum:DoublyStochasticity}
	The graph $\mathcal{G}$ is fixed, undirected, and connected, and the communication matrix $W$ is doubly stochastic. That is, $W \mathbf{1} = \mathbf{1}$ and $W^T \mathbf{1} = \mathbf{1}$.
\end{asmp}
The assumption that $W$ is doubly stochastic implies that $\|W - \frac{1}{n} \mathbf{1}\mathbf{1}^T\| = \sigma_W < 1$, where $\sigma_W$ is the mixing rate of the network. When $\sigma_W$ is smaller, the agents in the network reach consensus faster; see, e.g.,  \cite{zhang2018consensus}.

\section{Algorithm}
\begin{subequations}
	\label{eq:eq_oco_2}
	\begin{algorithm}[t]
		\caption{D-OCO with prediction} \label{alg:oco_nprediction}
			\KwIn{The primal variables $x_{i,0}$, the local gradients $\nabla f_{i,0}(x_{i,0})$ and global gradient estimates $y_{i,0} = \alpha \nabla f_{i,0}(x_{i,0})$ for all $i$. The estimated dynamic matrix $A$. $t = 0$.}{
			Agent $i$ computes \vspace{2mm}
				\begin{equation}
				\label{eq:Algorithm_step1}
				\hat{x}_{i,t+1} = \sum_{j \in N_i} W_{ij} (x_{j,t} - y_{j,t});
				\end{equation}
				
			Agent $i$ computes		
				\begin{equation}
				\label{eq:Algorithm_step2}
				x_{i,t+1} = A \hat{x}_{i, t+1};
				\end{equation}
				
			Agent $i$ computes \vspace{2mm}
				\begin{equation}
				\label{eq:Algorithm_step3}
				\begin{split}
				y_{i,t+1} = \sum_{j \in N_i} W_{ij} y_{j,t} & + \alpha \nabla f_{i, t+1}(x_{i,t+1})  \\
				& - \alpha \nabla f_{i, t}(x_{i,t});
				\end{split}
				\end{equation}
				
			$t \leftarrow t+1$, go to step 1.
		}
	\end{algorithm}
\end{subequations}

In this paper, we propose a new distributed gradient descent algorithm to solve the online optimization problem~\eqref{eqn:ProblemForm} that has an improved dynamic regret. 
Algorithm~\ref{alg:oco_nprediction} presents our proposed online distributed optimization algorithm with gradient tracking and prediction.

Specifically, every agent $i$ holds a local candidate optimal $x_{i,t}$ and a local gradient estimate $y_{i,t}$ based on its present local loss function. Using current information, each agent estimates its next local candidate optimal $\hat{x}_{i,t+1}$ using one step of gradient descent via

\begin{align}
	\hat{x}_{t+1} =( W \otimes I) (x_t - y_t), \nonumber
\end{align}
where $\hat{x}_t, x_t$ and $y_t$ stacks local variables $\hat{x}_{i,t}, x_{i,t}$ and $y_{i,t}$,
and then predict the next point using the given matrix $A$ of the dynamical system via

\begin{align}
	x_{t+1} = (I \otimes A) \hat{x}_{t+1}. \nonumber 
\end{align}
When the optimizer dynamics are not known, we can set $A$ to be the identity matrix. Then, the algorithm is reduced to the algorithm studied in \cite{hall2015online}.
After this, the next loss functions are revealed to each agent and the agents compute their individual $y_{i,t+1}$ by adding the $y_{i,t}$'s that have been communicated and combined with the matrix $W$ with a scaled gradient tracking step. The gradient tracking step is employed to correct for the change in objective function gradients \cite{qu2018harnessing,pu2020distributed} via

\begin{align}
	y_{t+1} = (W \otimes I) y_t + \alpha g_{t+1}(x_{t+1}) - \alpha g_{t}(x_{t}). \nonumber
\end{align}

\section{Convergence Analysis}
We now theoretically bound the dynamic regret of Algorithm~\ref{alg:oco_nprediction} under the assumptions~\ref{assum:DynamicalMatrix} - \ref{assum:DoublyStochasticity}. The outline of our analysis is as follows. First, Lemma~\ref{lemma:ConservationY} shows that the average of the local estimators $y_{i,t}$ can track the sum of the local gradients well. 
Then, we show that both the tracking error and the network error contract with some perturbation at each time step in Lemmas~\ref{lem:DynamicTrackingError} and \ref{lem:DynamicNetworkError}. The strong convexity assumption is necessary for proving the bound on the tracking error; the network error will be bounded via the gradient tracking employed in our algorithm. 
These bounds, together with Assumptions~\ref{assum:DynamicalMatrix} and \ref{assum:BoundedTimeVaryingFunction}, are then used to show that the iterates of Algorithm~\ref{alg:oco_nprediction} are confined within a bounded neighborhood around the changing optimizer in Lemma IV.5. This justifies the Lipschitzness of the objective function over this bounded neighborhood. The final regret bound is provided in Theorem~\ref{thm:BoundRegret}.

Define the gradient estimator $y_t = [y_{1,t}^T, y_{2,t}^T,$  $\dots, y_{N,t}^T]^T$. Then, we have the following conservation property of $y_t$.
\begin{lem}
	\label{lemma:ConservationY}
	Let Assumption~\ref{assum:DoublyStochasticity} hold, and assume also that the local estimator is initialized as $y_{i,0} = \alpha \nabla f_{i,0}(x_{i,0})$ for all $i$. Then, for all  $t$, we have that 
	
	\begin{equation*}
		(\mathbf{1}^T \otimes I) y_t = \alpha (\mathbf{1}^T \otimes I) g_t(x_t).
	\end{equation*}
\end{lem}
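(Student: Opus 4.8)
The plan is to proceed by induction on $t$, working directly with the vectorized form of the gradient-tracking update and invoking only the column-stochasticity of $W$ from Assumption~\ref{assum:DoublyStochasticity}. The statement asserts that the aggregate $(\mathbf{1}^T \otimes I) y_t$, which is nothing but the sum $\sum_{i=1}^n y_{i,t}$ of the local gradient estimators, stays exactly equal to $\alpha$ times the sum of the local gradients $(\mathbf{1}^T \otimes I) g_t(x_t) = \alpha\sum_{i=1}^n \nabla f_{i,t}(x_{i,t})$ at every step. So the conservation property is really a persistent exact-tracking identity for the \emph{average} gradient estimator, and the whole argument reduces to a telescoping cancellation.

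For the base case $t = 0$, I would invoke the prescribed initialization $y_{i,0} = \alpha \nabla f_{i,0}(x_{i,0})$ and sum over $i$, which yields $(\mathbf{1}^T \otimes I) y_0 = \alpha (\mathbf{1}^T \otimes I) g_0(x_0)$ immediately. For the inductive step, I would take the stacked $y$-update $y_{t+1} = (W \otimes I) y_t + \alpha g_{t+1}(x_{t+1}) - \alpha g_t(x_t)$ and left-multiply throughout by $(\mathbf{1}^T \otimes I)$. The single algebraic fact needed is $(\mathbf{1}^T \otimes I)(W \otimes I) = (\mathbf{1}^T W) \otimes I = \mathbf{1}^T \otimes I$, where the first equality is the mixed-product property of the Kronecker product and the second uses $\mathbf{1}^T W = \mathbf{1}^T$, i.e.\ the column-stochasticity $W^T \mathbf{1} = \mathbf{1}$. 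This collapses the consensus term and leaves $(\mathbf{1}^T \otimes I) y_{t+1} = (\mathbf{1}^T \otimes I) y_t + \alpha (\mathbf{1}^T \otimes I) g_{t+1}(x_{t+1}) - \alpha (\mathbf{1}^T \otimes I) g_t(x_t)$.

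Substituting the inductive hypothesis $(\mathbf{1}^T \otimes I) y_t = \alpha (\mathbf{1}^T \otimes I) g_t(x_t)$ then cancels the two $g_t(x_t)$ terms and gives $(\mathbf{1}^T \otimes I) y_{t+1} = \alpha (\mathbf{1}^T \otimes I) g_{t+1}(x_{t+1})$, closing the induction.

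There is no genuine obstacle in this argument: it is a pure bookkeeping identity, and the only places the hypotheses enter are the column-stochasticity of $W$ (to annihilate the consensus mixing) and the prescribed initialization (for the base case). What is worth emphasizing is that the conservation holds \emph{regardless} of the prediction step $x_{t+1} = (I \otimes A)\hat{x}_{t+1}$ and \emph{independently} of any smoothness or strong-convexity assumption, since those only determine the points $x_t$ at which the gradients are evaluated and never disturb the summation identity itself. This is precisely why the lemma can serve as the foundation for the later tracking-error and network-error contraction estimates.
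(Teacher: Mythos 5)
Your proof is correct and follows essentially the same route as the paper's: induction on $t$, with the base case given by the initialization $y_{i,0} = \alpha \nabla f_{i,0}(x_{i,0})$ and the inductive step obtained by left-multiplying the update \eqref{eq:Algorithm_step3} by $(\mathbf{1}^T \otimes I)$ and using $\mathbf{1}^T W = \mathbf{1}^T$ from Assumption~\ref{assum:DoublyStochasticity} to collapse the mixing term and cancel the $g_t(x_t)$ terms. Your explicit identification of column-stochasticity as the only property of $W$ actually used, and your remark that the identity is insensitive to the prediction step and convexity assumptions, are accurate refinements the paper leaves implicit, but the argument itself is the same.
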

\begin{proof}
	We prove this statement by induction. By the initialization of $y_{i,0}$, it is easy to see that 
	
	\begin{equation*}
		(\mathbf{1}^T \otimes I) y_0 = \alpha (\mathbf{1}^T \otimes I) g_0(x_0).
	\end{equation*}
	Then, assuming that the lemma is true at time $t-1$, according to \eqref{eq:Algorithm_step3}, we have that
	
	\begin{equation*}
	\begin{split}
		(\mathbf{1}^T \otimes I) y_{t} = & \; (\mathbf{1}^T \otimes I) \big( (W \otimes I) \\
		&  \; y_{t-1} + \alpha g_t(x_t) - \alpha g_{t-1}(x_{t-1}) \big) \\
		= &\; \alpha (\mathbf{1}^T \otimes I) g_{t}(x_{t}),
	\end{split}
	\end{equation*}
	where the second equation is due to the induction assumption. This concludes the proof.
\end{proof}

Let $\bar{x}_t = \frac{1}{N} \sum_{i=1}^{N} x_{i,t}$. 
%
The next lemma characterizes the dynamics of the tracking error $\|\bar{x}_t - x_t^\ast\|$. 
\begin{lem}
	\label{lem:DynamicTrackingError}
	Let Assumptions~\ref{assum:DynamicalMatrix}, \ref{assum:LipschitzGradient},  \ref{assum:StrongConvexity},  and \ref{assum:DoublyStochasticity} hold. Then, when $\alpha \leq \frac{1}{L_g}$, the tracking error $\|\bar{x}_t - x_t^\ast\|$ satisfies the following inequality for all $t$,
	
	\begin{equation}
		\label{eqn:TrackingError}
		\begin{split}
		& \|\bar{x}_{t+1} - x_{t+1}^\star\|  \leq (1 - \frac{\alpha}{n} \mu) \|\bar{x}_t - x_t^\ast\| + \\ 
		& \frac{L_g}{\sqrt{n}}\alpha \|x_t - \mathbf{1} \otimes \bar{x}_t\| + \|Ax_t^\star - x_{t+1}^\star\|.
		\end{split}
	\end{equation}
\end{lem}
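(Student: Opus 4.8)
The plan is to first reduce the network average $\bar{x}_{t+1}$ to a single perturbed gradient-descent-plus-prediction step and then analyze that step against the optimizer $x_{t+1}^\ast$. Stacking the three updates of Algorithm~\ref{alg:oco_nprediction} gives $x_{t+1} = (I\otimes A)(W\otimes I)(x_t - y_t)$. I would left-multiply by $\frac{1}{n}(\mathbf{1}^T\otimes I)$, collapse the Kronecker products using $(\mathbf{1}^T\otimes I)(I\otimes A) = \mathbf{1}^T\otimes A$ and the mixing identity $\mathbf{1}^T W = \mathbf{1}^T$ from Assumption~\ref{assum:DoublyStochasticity}, and substitute the conservation identity $(\mathbf{1}^T\otimes I)y_t = \alpha(\mathbf{1}^T\otimes I)g_t(x_t)$ of Lemma~\ref{lemma:ConservationY}. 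This yields
\begin{equation*}
\bar{x}_{t+1} = A\Big(\bar{x}_t - \frac{\alpha}{n}\sum_{i=1}^{n}\nabla f_{i,t}(x_{i,t})\Big),
\end{equation*}
so the average performs a joint descent/prediction step, except that each local gradient is evaluated at its own iterate $x_{i,t}$ rather than at $\bar{x}_t$.

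Next I would insert the predicted optimizer $Ax_t^\ast$ as an intermediate point. Setting $\hat{\bar{x}}_{t+1} := \bar{x}_t - \frac{\alpha}{n}\sum_i \nabla f_{i,t}(x_{i,t})$, the decomposition $\bar{x}_{t+1} - x_{t+1}^\ast = A(\hat{\bar{x}}_{t+1} - x_t^\ast) + (Ax_t^\ast - x_{t+1}^\ast)$ together with $\|A\|\le 1$ from Assumption~\ref{assum:DynamicalMatrix} and the triangle inequality immediately isolates the prediction-error term $\|Ax_t^\ast - x_{t+1}^\ast\|$, so it remains only to bound $\|\hat{\bar{x}}_{t+1} - x_t^\ast\|$.

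To bound that quantity I would split the averaged gradient using $\nabla f_t = \sum_i \nabla f_{i,t}$, writing
\begin{equation*}
\hat{\bar{x}}_{t+1} - x_t^\ast = \Big(\bar{x}_t - x_t^\ast - \tfrac{\alpha}{n}\nabla f_t(\bar{x}_t)\Big) - \tfrac{\alpha}{n}\sum_{i=1}^n\big(\nabla f_{i,t}(x_{i,t}) - \nabla f_{i,t}(\bar{x}_t)\big).
\end{equation*}
The first bracket is an exact gradient step on $f_t$: since $f_t$ is $\mu$-strongly convex (Assumption~\ref{assum:StrongConvexity}) and $nL_g$-smooth (the smoothness constants of the local functions in Assumption~\ref{assum:LipschitzGradient} add up), and $\nabla f_t(x_t^\ast)=0$, the standard strong-convexity/smoothness contraction with effective step $\frac{\alpha}{n}\le \frac{1}{nL_g}$ (which is exactly the hypothesis $\alpha\le 1/L_g$) gives $\|\bar{x}_t - x_t^\ast - \frac{\alpha}{n}\nabla f_t(\bar{x}_t)\| \le (1-\frac{\alpha}{n}\mu)\|\bar{x}_t - x_t^\ast\|$. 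For the second, consensus-error term I would apply $L_g$-Lipschitzness of each $\nabla f_{i,t}$ and then Cauchy--Schwarz, $\sum_i \|x_{i,t}-\bar{x}_t\| \le \sqrt{n}\,\|x_t - \mathbf{1}\otimes\bar{x}_t\|$, to obtain the bound $\frac{\alpha L_g}{\sqrt{n}}\|x_t - \mathbf{1}\otimes\bar{x}_t\|$. Summing the three contributions reproduces exactly \eqref{eqn:TrackingError}.

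The main obstacle I anticipate is the contraction step. Because strong convexity is postulated only for the aggregate $f_t$ and not for the individual $f_{i,t}$, the contraction must be driven by $\nabla f_t(\bar{x}_t)$, which is what forces the consensus-error split above rather than a direct estimate on the local gradients. One must also keep careful track of the fact that the global smoothness constant is $nL_g$, so that the effective step $\alpha/n$ is admissible precisely under $\alpha\le 1/L_g$ and produces the sharp factor $(1-\frac{\alpha}{n}\mu)$; the remainder of the argument is Kronecker-product bookkeeping and elementary norm inequalities.
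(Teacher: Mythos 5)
Your proposal is correct and follows essentially the same route as the paper's proof: averaging the stacked update, invoking the conservation identity of Lemma~\ref{lemma:ConservationY}, extracting $A$ with $\|A\|\le 1$ to isolate $\|Ax_t^\star - x_{t+1}^\star\|$, and splitting the remainder into an exact gradient step on $f_t$ (contracting by $1-\tfrac{\alpha}{n}\mu$) plus a consensus-error term bounded via $L_g$-Lipschitz gradients and Cauchy--Schwarz. The only cosmetic difference is that the paper cites Lemma 10 of \cite{qu2018harnessing} for the contraction step, whereas you derive the same fact directly from $\mu$-strong convexity and $nL_g$-smoothness of $f_t$ with effective step $\alpha/n \le 1/(nL_g)$.
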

\begin{proof}
	First, using the definition of $\bar{x}_{t+1}$ and adding and subtracting $Ax_t^\ast$, we have that $\|\bar{x}_{t+1} - x_{t+1}^\star\| = \|\frac{1}{n}(\mathbf{1}^T \otimes I)x_{t+1} - Ax_t^\star + Ax_t^\star - x_{t+1}^\star\|$. Then, according to the updates in \eqref{eq:Algorithm_step1} and \eqref{eq:Algorithm_step2}, we can obtain that
	
	\begin{flalign*}
	\label{eqn:PfTracking1}
	& \|\bar{x}_{t+1} - x_{t+1}^\star\| \leq  \|\frac{1}{n}(\mathbf{1}^T \otimes I) (W \otimes A) (x_t -  y_t) \nonumber  \\
	& \quad \quad \quad \quad \quad \quad \quad - Ax_t^\star\|  \quad + \|Ax_t^\star - x_{t+1}^\star\|.  \nonumber
	\end{flalign*}
	According to Assumption~\ref{assum:DoublyStochasticity} and extracting matrix $A$, we get that $\|\frac{1}{n}(\mathbf{1}^T \otimes I) (W \otimes A) (x_t -  y_t) - Ax_t^\star\| = \| A(\bar{x}_t - \frac{1}{n}(\mathbf{1}^T\otimes I) y_t - x_t^\star)\|$. Therefore, we obtain that
	
	\begin{equation}
		\label{eqn:PfTracking1.1}
		\begin{split}
		\|\bar{x}_{t+1} - x_{t+1}^\star\|  \leq &  \| A\big(\bar{x}_t - \frac{1}{n}(\mathbf{1}^T\otimes I) y_t - x_t^\star\big) \| \\
		&  + \| Ax_t^\star - x_{t+1}^\star \|. 
		\end{split}
	\end{equation}
We now place an upper bound on the first term in the right hand side of Equation \eqref{eqn:PfTracking1.1}. Since $A$ has norm at most 1, it follows by the definition of the matrix norm that $\|Av \| \leq \| v \|.$ Thus, the first term on the right hand side of Equation \eqref{eqn:PfTracking1.1} is bounded above by $\|\bar{x}_t - \frac{1}{n}(\mathbf{1}^T\otimes I) y_t - x_t^\star\|$.
Recalling Lemma~\ref{lemma:ConservationY}, we can replace the term $(\mathbf{1}^T\otimes I) y_t$ in the above with $\alpha (\mathbf{1}^T \otimes I) g_{t}(x_{t})$. By doing this and using the triangle inequality, we can bound the quantity $\|\bar{x}_t - \frac{1}{n}(\mathbf{1}^T\otimes I) y_t - x_t^\star\|$ by a sum of the following two terms:

\begin{equation}
	\label{eqn:PfTrackign2.1}
	\|\bar{x}_t - \frac{\alpha}{n} (\mathbf{1}^T \otimes I) g_t(\mathbf{1} \otimes \bar{x}_t) - x_t^\ast\|
\end{equation}
\begin{equation}
	\label{eqn:PfTracking2.2}
	 \| \frac{\alpha}{n} (\mathbf{1}^T \otimes I) \big( g_t(\mathbf{1} \otimes \bar{x}_t) - g_t(x_t) \big) \|
\end{equation}
By using both Assumption~\ref{assum:StrongConvexity} and Lemma 10 in \cite{qu2018harnessing}, we have that

	\begin{flalign}
		\label{eqn:PfTracking3}
		& \|\bar{x}_t - \frac{1}{n}(\mathbf{1}^T \otimes I) y_t - x_t^\ast\|  \leq (1 - \frac{\alpha}{n} \mu) \|\bar{x}_t - x_t^\ast\| & \nonumber \\ 
		& \quad \quad \quad \quad \quad \quad \quad \quad \quad + \frac{L_g}{\sqrt{n}}\alpha \|x_t - \mathbf{1} \otimes \bar{x}_t\|. &
	\end{flalign}
	Combining inequality~\eqref{eqn:PfTracking1.1} and \eqref{eqn:PfTracking3}, we obtain the desired result in \eqref{eqn:TrackingError}.
\end{proof}
The next lemma characterizes the dynamics of the network errors  $\|x_t - \mathbf{1}\bar{x}_t\|$ and $\|y_t - \mathbf{1} \otimes \bar{y}_t\|$.
\begin{lem}
	\label{lem:DynamicNetworkError}
	Let Assumptions~\ref{assum:DynamicalMatrix}, \ref{assum:LipschitzGradient}, \ref{assum:StrongConvexity} and \ref{assum:DoublyStochasticity} hold. Then, the newtork errors $\|x_t - \mathbf{1}\bar{x}_t\|$ and $\|y_t - \mathbf{1} \otimes \bar{y}_t\|$ satisfy the following inequalities at all $t$, 
	
	\begin{equation}
	\label{eqn:NetworkError_x}
	\begin{split}
	\|x_{t+1} - \mathbf{1} \otimes \bar{x}_{t+1}\| \leq & \; \sigma_W \|x_t - \mathbf{1} \otimes \bar{x}_t\|  \\
	& + \sigma_W \|y_t - \mathbf{1} \otimes \bar{y}_t\|,
	\end{split}
	\end{equation}
	and 
	
	\begin{flalign}
	\label{eqn:NetworkError_y}
	& \|y_{t+1} - \mathbf{1} \otimes \bar{y}_{t+1}\| \leq \sigma_W(1 + L_g\alpha) \|y_t - \mathbf{1}\otimes\bar{y}_t\| & \nonumber \\
	&  + L_g(1 + \sigma_W + L_g\alpha) \alpha \|x_t - \mathbf{1} \otimes \bar{x}_t\| & \nonumber \\ 
	& + \sqrt{n}L_g(2 + L_g\alpha)\alpha\|\bar{x}_t - x_t^\ast\| & \\ 
	& + \alpha \|g_{t+1}(\mathbf{1} \otimes Ax_t^\ast) - g_t(\mathbf{1} \otimes x_t^\ast)\|. & \nonumber 
	\end{flalign}
\end{lem}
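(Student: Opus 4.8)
The plan is to treat both disagreement errors by projecting the corresponding update onto the consensus-orthogonal subspace. Write $J := \frac{1}{n}\mathbf{1}\mathbf{1}^T$, so that $\mathbf{1}\otimes\bar{x}_t = (J\otimes I)x_t$ and $\mathbf{1}\otimes\bar{y}_t = (J\otimes I)y_t$, and record the facts I will use repeatedly: by Assumption~\ref{assum:DoublyStochasticity} one has $(I-J)W = W-J$ with $\|W-J\|=\sigma_W$, and $(W-J)\mathbf{1}=0$, whence $((W-J)\otimes A)(\mathbf{1}\otimes v)=0$ for every $v$. These let me discard consensus components for free after each mixing step.

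For \eqref{eqn:NetworkError_x}, I would substitute the combined primal update $x_{t+1}=(W\otimes A)(x_t-y_t)$ into $x_{t+1}-\mathbf{1}\otimes\bar{x}_{t+1}=((I-J)\otimes I)x_{t+1}$, giving $x_{t+1}-\mathbf{1}\otimes\bar{x}_{t+1}=((W-J)\otimes A)(x_t-y_t)$. Since $(W-J)\mathbf{1}=0$, the consensus parts of $x_t$ and $y_t$ are annihilated, so this equals $((W-J)\otimes A)\big((x_t-\mathbf{1}\otimes\bar{x}_t)-(y_t-\mathbf{1}\otimes\bar{y}_t)\big)$; taking norms, using $\|(W-J)\otimes A\|=\|W-J\|\,\|A\|\le\sigma_W$ (Assumption~\ref{assum:DynamicalMatrix}), and applying the triangle inequality yields \eqref{eqn:NetworkError_x}.

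For \eqref{eqn:NetworkError_y}, I project the gradient-tracking update $y_{t+1}=(W\otimes I)y_t+\alpha\big(g_{t+1}(x_{t+1})-g_t(x_t)\big)$. The mixing term contributes $((W-J)\otimes I)(y_t-\mathbf{1}\otimes\bar{y}_t)$, bounded by $\sigma_W\|y_t-\mathbf{1}\otimes\bar{y}_t\|$, while the gradient-increment term is bounded, using $\|(I-J)\otimes I\|\le 1$, by $\alpha\|g_{t+1}(x_{t+1})-g_t(x_t)\|$. The crucial step is to estimate this increment by inserting the reference points $g_{t+1}(\mathbf{1}\otimes Ax_t^\ast)$ and $g_t(\mathbf{1}\otimes x_t^\ast)$: the central difference is exactly the last term $\|g_{t+1}(\mathbf{1}\otimes Ax_t^\ast)-g_t(\mathbf{1}\otimes x_t^\ast)\|$ of \eqref{eqn:NetworkError_y}, and the two outer differences are controlled by $L_g$-smoothness (Assumption~\ref{assum:LipschitzGradient}) as $L_g\|x_{t+1}-\mathbf{1}\otimes Ax_t^\ast\|$ and $L_g\|x_t-\mathbf{1}\otimes x_t^\ast\|$.

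It then remains to bound these two displacement terms. For $\|x_t-\mathbf{1}\otimes x_t^\ast\|$ I split $x_t-\mathbf{1}\otimes x_t^\ast=(x_t-\mathbf{1}\otimes\bar{x}_t)+\mathbf{1}\otimes(\bar{x}_t-x_t^\ast)$ and use $\|\mathbf{1}\otimes v\|=\sqrt{n}\|v\|$. The main obstacle is $\|x_{t+1}-\mathbf{1}\otimes Ax_t^\ast\|$: I would decompose it as $\|x_{t+1}-\mathbf{1}\otimes\bar{x}_{t+1}\|+\sqrt{n}\|\bar{x}_{t+1}-Ax_t^\ast\|$, bound the first summand by the already-established \eqref{eqn:NetworkError_x}, and bound the second by recalling $\bar{x}_{t+1}=A(\bar{x}_t-\bar{y}_t)$ together with Lemma~\ref{lemma:ConservationY}, which reduces $\|\bar{x}_{t+1}-Ax_t^\ast\|\le\|\bar{x}_t-\tfrac{1}{n}(\mathbf{1}^T\otimes I)y_t-x_t^\ast\|$ to the quantity already controlled in \eqref{eqn:PfTracking3}. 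Collecting the coefficients of $\|x_t-\mathbf{1}\otimes\bar{x}_t\|$, $\|y_t-\mathbf{1}\otimes\bar{y}_t\|$ and $\|\bar{x}_t-x_t^\ast\|$, multiplying through by $\alpha$, and using $\|A\|\le1$ and $(1-\tfrac{\alpha}{n}\mu)\le1$ where convenient then gives \eqref{eqn:NetworkError_y}. I expect this last bookkeeping — in particular correctly routing the tracking error $\|\bar{x}_t-x_t^\ast\|$ through the predicted iterate $x_{t+1}$ — to be the only delicate part, since the projection and smoothness estimates are otherwise routine.
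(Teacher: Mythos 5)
Your proposal is correct, and for \eqref{eqn:NetworkError_x} and the overall skeleton of \eqref{eqn:NetworkError_y} it coincides with the paper's proof: the same projection $((I-\tfrac{1}{n}\mathbf{1}\mathbf{1}^T)\otimes I)$ onto the consensus-orthogonal subspace, the same split of $y_{t+1}$ into the mixing term and the gradient increment, and the same insertion of the reference points $\mathbf{1}\otimes Ax_t^\ast$ and $\mathbf{1}\otimes x_t^\ast$ followed by $L_g$-smoothness. Where you genuinely diverge is the subroutine for $\|x_{t+1}-\mathbf{1}\otimes Ax_t^\ast\|$. The paper bounds it directly: it writes $\|x_{t+1}-\mathbf{1}\otimes Ax_t^\ast\|\le\|\hat{x}_{t+1}-\mathbf{1}\otimes x_t^\ast\|=\|(W\otimes I)(x_t-y_t)-\mathbf{1}\otimes x_t^\ast\|$, adds and subtracts $\mathbf{1}\otimes\bar{x}_t$ and $\mathbf{1}\otimes\bar{y}_t$, and handles the leftover $\|\mathbf{1}\otimes\bar{y}_t\|$ by combining Lemma~\ref{lemma:ConservationY} with the first-order optimality condition $(\tfrac{1}{n}\mathbf{1}\mathbf{1}^T\otimes I)g_t(\mathbf{1}\otimes x_t^\ast)=0$, giving $\|\mathbf{1}\otimes\bar{y}_t\|\le\alpha L_g\|x_t-\mathbf{1}\otimes x_t^\ast\|$. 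You instead route through the average, $\bar{x}_{t+1}=A(\bar{x}_t-\bar{y}_t)$, and recycle the strong-convexity contraction \eqref{eqn:PfTracking3} together with the just-proved \eqref{eqn:NetworkError_x}. Your bookkeeping checks out: collecting coefficients gives exactly $L_g(1+\sigma_W+L_g\alpha)\alpha$ and $\sigma_W(1+L_g\alpha)$ on the two network errors, and $\sqrt{n}L_g\alpha(2-\tfrac{\alpha\mu}{n})\le\sqrt{n}L_g(2+L_g\alpha)\alpha$ on the tracking error, so you even obtain a marginally tighter constant. The one caveat worth flagging: \eqref{eqn:PfTracking3} is established under the hypothesis $\alpha\le\tfrac{1}{L_g}$ of Lemma~\ref{lem:DynamicTrackingError}, so your proof inherits that stepsize restriction, whereas the paper's argument for this lemma needs no condition on $\alpha$ and uses only first-order optimality rather than strong convexity at this point. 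This is harmless downstream, since the lemma is only ever invoked under \eqref{eqn:StepsizeCondition}, which includes $\alpha\le\tfrac{1}{L_g}$, but strictly speaking you prove a conditional version of the statement and should add that hypothesis.
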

\begin{proof}
	First, consider the dynamics of $\|x_{t} - \mathbf{1} \otimes \bar{x}_t\|$. We have that 
$$\|x_{t+1} - \mathbf{1} \otimes \bar{x}_{t+1}\| = \|\big((I - \frac{1}{n}\mathbf{1}\mathbf{1}^T) \otimes I\big) x_{t+1}\|. $$ 
\noindent Recalling the updates in \eqref{eq:Algorithm_step1} and \eqref{eq:Algorithm_step2}, as well as Assumption~\ref{assum:DoublyStochasticity}
$$ \|x_{t+1} - \mathbf{1} \otimes \bar{x}_{t+1}\| =  \|\big( (W - \frac{1}{n}\mathbf{1}\mathbf{1}^T) \otimes A\big)  (x_t - y_t) \|.$$ 

\noindent Using the definition of the matrix norm and Assumption~\ref{assum:DynamicalMatrix}, we have that

	\begin{equation*}
	  \|x_{t+1} - \mathbf{1} \otimes \bar{x}_{t+1}\| \leq \|\big((W - \frac{1}{n}\mathbf{1}\mathbf{1}^T) \otimes I\big) (x_t - y_t)\|.
	\end{equation*}
\noindent By Assumption~\ref{assum:DoublyStochasticity}, we obtain that 

$$\big((W - \frac{1}{n}\mathbf{1}\mathbf{1}^T) \otimes I\big) (\mathbf{1} \otimes \bar{x}_t) = 0$$ 
\noindent and 
$$\big((W - \frac{1}{n}\mathbf{1}\mathbf{1}^T) \otimes I\big) (\mathbf{1} \otimes \bar{y}_t) = 0.$$ Therefore, we can add $\mathbf{1} \otimes \bar{y}_t - \mathbf{1} \otimes \bar{x}_t$ inside the term $(x_t - y_t)$ on the right hand side of the above inequality and obtain, using the triangle inequality, that

	\begin{equation}
		\begin{split}
		\|x_{t+1} - \mathbf{1} \otimes \bar{x}_{t+1}\| \leq & \; \sigma_W \|x_t - \mathbf{1} \otimes \bar{x}_t\| \\
		& + \sigma_W \|y_t - \mathbf{1} \otimes \bar{y}_t\|.
		\end{split}
	\end{equation}
	We now consider the dynamics of $\|y_t - \mathbf{1} \otimes \bar{y}_t\|$. We have that $\|y_{t+1} - \mathbf{1} \otimes \bar{y}_{t+1}\| = \|\big((I - \frac{1}{n}\mathbf{1} \mathbf{1}^T) \otimes I\big) y_{t+1}\|$. By \eqref{eq:Algorithm_step3}, we know $\|y_{t+1} - \mathbf{1} \otimes \bar{y}_{t+1}\|$ equals
	
$$\|\big((I - \frac{1}{n}\mathbf{1} \mathbf{1}^T) \otimes I\big) ((W \otimes I) y_t + \alpha g_{t+1}(x_{t+1}) - \alpha g_t(x_t))\|,$$

\noindent and another application of the triangle inequality above shows that $\|y_{t+1} - \mathbf{1} \otimes \bar{y}_{t+1}\|$ is bounded by the sum of the following two terms
\begin{equation}
\label{eqn:PfNetworking_1.1}
\|((W - \frac{1}{n}\mathbf{1} \mathbf{1}^T) \otimes I) y_t \|
\end{equation}
\begin{equation}
\label{eqn:PfNetworking_1.2}
\alpha \| ((I - \frac{1}{n}\mathbf{1} \mathbf{1}^T) \otimes I) (g_{t+1}(x_{t+1}) - g_t(x_t))\|
\end{equation}
Since $((W - \frac{1}{n}\mathbf{1} \mathbf{1}^T) \otimes I) (\mathbf{1} \otimes \bar{y}_t) = 0$ and, by Assumption~\ref{assum:DoublyStochasticity}, $\|W - \frac{1}{n}\mathbf{1} \mathbf{1}^T\| \leq \sigma_W$, we have 

$$\|((W - \frac{1}{n}\mathbf{1} \mathbf{1}^T) \otimes I) y_t \| \leq \sigma_W \|y_t - \mathbf{1} \otimes \bar{y}_t\|.$$ 

Furthermore, since $\|I - \frac{1}{n}\mathbf{1} \mathbf{1}^T\| \leq 1$, combining the above discussion with \eqref{eqn:PfNetworking_1.1} and \eqref{eqn:PfNetworking_1.2}, we get that

	\begin{equation}
		\label{eqn:PfNetworking_2}
		\begin{split}
		\|y_{t+1} - \mathbf{1} \otimes \bar{y}_{t+1}\|  & \leq \sigma_W \|y_t - \mathbf{1}\otimes \bar{y_t}\| \\ 
& + \alpha\|g_{t+1}(x_{t+1}) - g_t(x_t)\|. \\
		\end{split}
	\end{equation}
	Adding and subtracting the terms $g_{t+1}(\mathbf{1} \otimes Ax_t^\ast)$ and $g_t(\mathbf{1} \otimes x_t^\ast)$ inside the norm $\|g_{t+1}(x_{t+1}) - g_t(x_t)\|$, and using the triangle inequality, we obtain that
	
	\begin{flalign}
		\label{eqn:PfNetworking_5}
		& \quad \|g_{t+1}(x_{t+1}) - g_t(x_t)\| & \nonumber \\
		& \leq \|g_{t+1}(x_{t+1}) - g_{t+1}(\mathbf{1} \otimes Ax_t^\ast) \| \nonumber \\
		&+ \| g_t(\mathbf{1} \otimes x_t^\ast) - g_t(x_t) \| \nonumber \\
		&+ \|  g_{t+1}(\mathbf{1} \otimes Ax_t^\ast) - g_t(\mathbf{1} \otimes x_t^\ast) \| & \nonumber \\
		& \leq  L_g  \|x_{t+1} - \mathbf{1} \otimes Ax_t^\ast\| +  L_g  \|x_{t} - \mathbf{1} \otimes x_t^\ast\| & \nonumber \\
		& \quad + \|g_{t+1}(\mathbf{1} \otimes Ax_t^\ast) - g_t(\mathbf{1} \otimes x_t^\ast)\|,
	\end{flalign}
	where the second inequality is due to Assumption~\ref{assum:LipschitzGradient}. Using the updates of $x_{t+1}$ in \eqref{eq:Algorithm_step1} and \eqref{eq:Algorithm_step2}, we have that 

\begin{align*}
\|x_{t+1} - \mathbf{1} \otimes Ax_t^\ast\| &= \|(I \otimes A) \hat{x}_{t+1} - \mathbf{1} \otimes Ax_t^\ast\| \\
&\leq \|A\| \|\hat{x}_{t+1} - \mathbf{1} \otimes x_t^\ast\| \\
&\leq \|\hat{x}_{t+1} - \mathbf{1} \otimes x_t^\ast\|
\end{align*} 

\noindent where the last two inequalies are due to the Cauchy-Schwartz inequality and Assumption~\ref{assum:DynamicalMatrix}. Replacing $\hat{x}_{t+1}$ with $(W \otimes I) (x_t - y_t)$ according to \eqref{eq:Algorithm_step1}, we have that

	\begin{equation*}
		\|x_{t+1} - \mathbf{1} \otimes Ax_t^\ast\|  \leq \| (W \otimes I) (x_t - y_t) - \mathbf{1} \otimes x_t^\ast \|.
	\end{equation*} 
	Adding and subtracting the terms $\mathbf{1} \otimes \bar{x}_t$ and $\mathbf{1} \otimes \bar{y}_t$ inside the norm on the right hand side of the above inequality, and using the triangle inequality, we get that
	
	\begin{equation}
		\label{eqn:PfNetworking_3}
		\begin{split}
		\|x_{t+1} - \mathbf{1} \otimes Ax_t^\ast\|  &\leq \| (W \otimes I) x_t - \mathbf{1} \otimes \bar{x}_t\| \\
		&+ \| (W \otimes I) y_t -  \mathbf{1} \otimes \bar{y}_t \| \\
		&+ \| \mathbf{1} \otimes \bar{x}_t - \mathbf{1} \otimes x_t^\ast \| + \| \mathbf{1} \otimes \bar{y}_t \|.
		\end{split}
	\end{equation}
	Next, we provide upper bounds on the terms on the right hand side of \eqref{eqn:PfNetworking_3} respectively. By Assumption~\ref{assum:DoublyStochasticity}, we have 
	
$$(W \otimes I) x_t - \mathbf{1} \otimes \bar{x}_t =  (W \otimes I) (x_t - \mathbf{1} \otimes \bar{x}_t ).$$ 
\noindent Moreover, since $(\frac{1}{n} \mathbf{1} \mathbf{1}^T \otimes I) (x_t - \mathbf{1} \otimes \bar{x}_t) = 0$, we have that 

$$(W \otimes I) x_t - \mathbf{1} \otimes \bar{x}_t = ( (W - \frac{1}{n} \mathbf{1} \mathbf{1}^T)  \otimes I) (x_t - \mathbf{1} \otimes \bar{x}_t ).$$ By Assumption~\ref{assum:DoublyStochasticity} and the Cauchy-Schwartz inequality, we have that 

	\begin{subequations}
		\begin{equation}
			\label{eqn:PfNetworking_3.1}
			\| (W \otimes I) x_t - \mathbf{1} \otimes \bar{x}_t) \| \leq \sigma_W \| x_t - \mathbf{1} \otimes \bar{x}_t \|.
		\end{equation}
	 Similarly, we obtain that
	 
	 	\begin{equation}
	 		\label{eqn:PfNetworking_3.2}
	 		\| (W \otimes I) y_t - \mathbf{1} \otimes \bar{y}_t) \| \leq \sigma_W \| y_t - \mathbf{1} \otimes \bar{y}_t \|.
	 	\end{equation}
	Furthermore, we have that
	
	\begin{equation}
		\label{eqn:PfNetworking_3.3}
		\|\mathbf{1} \otimes \bar{x}_t - \mathbf{1} \otimes x_t^\ast\| = \sqrt{n} \|\bar{x}_t - x_t^\ast \|.
	\end{equation}
In addition, we have that 

$$\|\mathbf{1} \otimes \bar{y}_t\| = \| \mathbf{1} \otimes \bar{y}_t -  \alpha(\frac{1}{n}\mathbf{1}\mathbf{1}^T \otimes I) g_t(\mathbf{1}\otimes x_t^\ast) \|$$ 
\noindent because of the definition of $x_t^\ast$ and the fact that $(\frac{1}{n}\mathbf{1}\mathbf{1}^T \otimes I) g_t(\mathbf{1}\otimes x_t^\ast) = 0$. Recalling Lemma~\ref{lemma:ConservationY}, we have that $\mathbf{1} \otimes \bar{y}_t = \alpha (\frac{1}{n} \mathbf{1}\mathbf{1}^T \otimes I) g_t(x_t)$. Therefore, we have that 

$$\|\mathbf{1} \otimes \bar{y}_t\| = \|\alpha (\frac{1}{n} \mathbf{1}\mathbf{1}^T \otimes I) (g_t(x_t) - g_t(\mathbf{1}\otimes x_t^\ast))\|.$$ Using the Cauchy-Schwartz inequality and the fact that $\|\frac{1}{n} \mathbf{1}\mathbf{1}^T\| = 1$, we see that

$$\|\mathbf{1} \otimes \bar{y}_t\|  \leq \alpha \| g_t(x_t) - g_t(\mathbf{1}\otimes x_t^\ast) \|.$$ Then, according to Assumption~\ref{assum:LipschitzGradient}, we get that

	\begin{equation}
		\label{eqn:PfNetworking_3.4}
		\|\mathbf{1} \otimes \bar{y}_t\| \leq  \alpha L_g \|x_t - \mathbf{1} \otimes x_t^\ast \|.
	\end{equation}
\end{subequations}
	Combining the bounds in \eqref{eqn:PfNetworking_3.1}-\eqref{eqn:PfNetworking_3.4} with inequality~\eqref{eqn:PfNetworking_3}, we obtain that	
	\begin{flalign}
		\label{eqn:PfNetworking_4}
		\|x_{t+1} - \mathbf{1} \otimes Ax_t^\ast\|  &\leq \sigma_W \| x_t - \mathbf{1} \otimes \bar{x}_t \| \nonumber \\
		&+ \sigma_W \| y_t -\mathbf{1} \otimes \bar{y}_t \| \nonumber \\
		&+  \sqrt{n} \|\bar{x}_t - x_t^\ast \| \nonumber \\
		&+ \alpha L_g \|x_t - \mathbf{1} \otimes x_t^\ast \|.
	\end{flalign}
	Combining inequality~\eqref{eqn:PfNetworking_4} with \eqref{eqn:PfNetworking_5}, we have that
	
	\begin{flalign}
& \|g_{t+1}(x_{t+1}) - g_t(x_t)\| \leq \sigma_W L_g  \| x_t - \mathbf{1} \otimes \bar{x}_t \|  \nonumber \\
&+ \sigma_W L_g  \nonumber \| y_t -\mathbf{1}  \otimes \bar{y}_t \| + \sqrt{n} L_g  \|\bar{x}_t - x_t^\ast \| \nonumber \\
&+ L_g(1 + \alpha L_g) \| x_t - \mathbf{1} \otimes x_t^\ast \| \nonumber \\
&+  \|g_{t+1}(\mathbf{1} \otimes Ax_t^\ast) - g_t(\mathbf{1} \otimes x_t^\ast)\|. & \nonumber 
\end{flalign}
Adding and subtracting $\mathbf{1} \otimes \bar{x}_t$ in $\|  x_t - \mathbf{1} \otimes x_t^\ast \|$ on the right hand side of the above inequality, using the triangle inequality, and rearranging terms, we get

\begin{flalign}
\label{eqn:PfNetworking_6}
& \|g_{t+1}(x_{t+1}) - g_t(x_t)\| \leq \sigma_W L_g  \| y_t -\mathbf{1}  \otimes \bar{y}_t \| \nonumber \\
&+ L_g (1 + \sigma_W + \alpha L_g) \| x_t - \mathbf{1}  \otimes \bar{x}_t \| \nonumber \\
&+ \sigma_W L_g  \| y_t -\mathbf{1}  \otimes \bar{y}_t \| + \sqrt{n} L_g (2 + \alpha L_g) \|\bar{x}_t  - x_t^\ast \| \nonumber \\ 
&+  \|g_{t+1}(\mathbf{1} \otimes Ax_t^\ast) - g_t(\mathbf{1} \otimes x_t^\ast)\|. 
\end{flalign}
Combining inequality~\eqref{eqn:PfNetworking_6} with \eqref{eqn:PfNetworking_2}, we obtain the desired result in \eqref{eqn:NetworkError_y}.
\end{proof}

Using the above lemmas, we now characterize the tracking performance of Algorithm~\ref{alg:oco_nprediction}. A byproduct of the analysis that follows is that the iterates of Algorithm~\ref{alg:oco_nprediction} are confined within a bounded neighborhood of the changing optimizers for all $t$. Specifically, let $z_t = [\|\bar{x}_{t} - x_t^\ast\|, \|x_t - \mathbf{1}\otimes\bar{x}_t\|, \|y_t - \mathbf{1}\otimes\bar{y}_t\|]^T$. Then, we can show the following result.
%

\begin{thm}
	\label{lem:BoundIteration}
	Let Assumptions~\ref{assum:DynamicalMatrix}, \ref{assum:BoundedTimeVaryingFunction}, \ref{assum:LipschitzGradient}, \ref{assum:StrongConvexity},  and \ref{assum:DoublyStochasticity} hold. Moreover, assume that the stepsize $\alpha$ satisfies
	
	\begin{align}\label{eqn:StepsizeCondition}
	\alpha \leq \min\bigl\{ \frac{n}{\mu}, & \; \frac{(1 - \sigma_W) (1 - \sqrt{\sigma_W})}{\sigma_W (2 + \sigma_W) + 3\sigma_W \frac{n}{\mu} L_g} \frac{1}{L_g}, \nonumber \\
	& \frac{1}{L_g}, \frac{1-\sqrt{\sigma_W}}{\sqrt{\sigma_W}} \frac{1}{L_g}\bigr\}.
	\end{align}
	Then, by running Algorithm~\ref{alg:oco_nprediction}, we have that
	
	\begin{align}
		\limsup_{t \rightarrow \infty} \|x_{i,t} - x_t^\ast\| \leq \frac{2}{C_{inv}} \max\{c_{ij}\}(C_w + C_g), \nonumber
	\end{align}
	where the expressions of $C_{inv}$ and $c_{ij}$'s are provided in Lemma A.1.
	Therefore, there exists a constant $C_z$, such that $\| x_{i,t} - x_t^\ast \| \leq C_z$ for all $i$ and $t$. Furthermore, the objective functions $f_{i,t}(x)$ have bounded gradients over the $C_z-$neighborhood around the optimizer $x_t^\ast$ for all $i$ and $t$, i.e., $\| \nabla f_{i,t}(x_{i,t}) \| \leq G$ for all $\| x_{i,t} - x_t^\ast \| \leq C_z$.
\end{thm}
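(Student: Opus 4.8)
The plan is to collect the three scalar recursions of Lemmas~\ref{lem:DynamicTrackingError} and \ref{lem:DynamicNetworkError} into one vector inequality
\begin{equation*}
z_{t+1} \leq H z_t + b_t,
\end{equation*}
understood entrywise, where $H$ is the $3 \times 3$ nonnegative matrix whose rows are read directly off \eqref{eqn:TrackingError}, \eqref{eqn:NetworkError_x} and \eqref{eqn:NetworkError_y}, namely with diagonal entries $1 - \frac{\alpha}{n}\mu$, $\sigma_W$ and $\sigma_W(1 + L_g \alpha)$, and off-diagonal couplings $\frac{L_g}{\sqrt{n}}\alpha$, $\sigma_W$, $L_g(1 + \sigma_W + L_g \alpha)\alpha$ and $\sqrt{n} L_g (2 + L_g \alpha)\alpha$. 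The driving term is $b_t = [\|Ax_t^\ast - x_{t+1}^\ast\|, 0, \alpha\|g_{t+1}(\mathbf{1}\otimes Ax_t^\ast) - g_t(\mathbf{1}\otimes x_t^\ast)\|]^T$, which by Assumptions~\ref{assum:DynamicalMatrix} and \ref{assum:BoundedTimeVaryingFunction} is dominated entrywise by the constant vector $b := [C_w, 0, \alpha C_g]^T$.

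The crux is to show that the stepsize condition \eqref{eqn:StepsizeCondition} forces the spectral radius $\rho(H) < 1$. Since $H$ is entrywise nonnegative, I would argue this through Perron--Frobenius theory, by exhibiting a strictly positive vector $v$ with $Hv < v$ entrywise (equivalently, verifying that $I - H$ is a nonsingular M-matrix with nonnegative inverse). The four terms in the minimum defining $\alpha$ are tailored to this end: the factor $1 - \frac{\alpha}{n}\mu$ keeps the tracking contraction strictly below one, while the factors built from $\sigma_W$, $\sqrt{\sigma_W}$ and $1-\sqrt{\sigma_W}$ control the network block and its coupling to the tracking row. I expect this verification to be the main obstacle, and it is precisely where the constant $C_{inv}$ and the cofactors $c_{ij}$ named in the statement arise --- as the determinant of $I - H$ and the entries of its adjugate --- with the detailed bookkeeping deferred to Lemma~A.1.

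Granting $\rho(H) < 1$, I would unroll the recursion to $z_t \leq H^t z_0 + \sum_{k=0}^{t-1} H^k b$. The transient $H^t z_0 \to 0$, and the Neumann series $\sum_{k\geq 0} H^k b = (I-H)^{-1} b$ converges with a nonnegative inverse, so $\limsup_{t\to\infty} z_t \leq (I-H)^{-1} b$ entrywise. Writing $(I-H)^{-1} = \frac{1}{C_{inv}}[c_{ij}]$ and using $\|x_{i,t} - x_t^\ast\| \leq \|\bar{x}_t - x_t^\ast\| + \|x_t - \mathbf{1}\otimes\bar{x}_t\|$, which bounds the per-agent error by the sum of the first two entries of $z_t$, yields the claimed limit $\frac{2}{C_{inv}}\max\{c_{ij}\}(C_w + C_g)$ after bounding the relevant entries of $(I-H)^{-1}b$. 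Since every partial sum is dominated by this limit and $H^t z_0$ is uniformly bounded over $t$ (as $H^t \to 0$), the entire sequence $\|x_{i,t} - x_t^\ast\|$ is bounded by some finite $C_z$.

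Finally, the gradient bound follows from smoothness: for any point with $\|x_{i,t} - x_t^\ast\| \leq C_z$, Assumption~\ref{assum:LipschitzGradient} gives $\|\nabla f_{i,t}(x_{i,t})\| \leq \|\nabla f_{i,t}(x_t^\ast)\| + L_g C_z$, and since the second part of Assumption~\ref{assum:BoundedTimeVaryingFunction} guarantees the gradients at the optimizers are uniformly bounded, the right-hand side is a constant $G$ independent of $i$ and $t$.
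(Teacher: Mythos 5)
Your proposal is correct and takes essentially the same route as the paper: the same stacked recursion $z_{t+1} \preceq \Phi(\alpha) z_t + d_t$, the same unrolling and Neumann-series bound $\limsup_t z_t \preceq (I-\Phi(\alpha))^{-1}\vec{d}$ with $(I-\Phi(\alpha))^{-1} = \frac{1}{C_{inv}}[c_{ij}]$, the same split $\|x_{i,t}-x_t^\ast\| \le \|\bar{x}_t - x_t^\ast\| + \|x_t - \mathbf{1}\otimes\bar{x}_t\|$ yielding the factor $2$, and the same smoothness argument for the gradient bound. The only cosmetic difference is in the bookkeeping you defer to Lemma~A.1: the paper first uses the last two stepsize constraints $\alpha \le \frac{1}{L_g}$ and $\alpha \le \frac{1-\sqrt{\sigma_W}}{\sqrt{\sigma_W}}\frac{1}{L_g}$ to dominate your raw coefficient matrix entrywise by the simpler $\Phi(\alpha)$ (e.g., $\sigma_W(1+L_g\alpha)\le\sqrt{\sigma_W}$, and the third-row couplings become $3\sqrt{n}L_g\alpha$ and $L_g(2+\sigma_W)\alpha$), whose cofactors are the stated $c_{ij}$, and then certifies $\rho(\Phi(\alpha))<1$ via an explicit $3\times 3$ determinant check (citing an irreducible-nonnegative-matrix lemma) rather than your equivalent Perron--Frobenius/M-matrix test.
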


\begin{proof}
	According to Lemmas~\ref{lem:DynamicTrackingError} and \ref{lem:DynamicNetworkError}, and for $\alpha \leq \min\{\frac{1}{L_g}, \frac{1-\sqrt{\sigma_W}}{\sqrt{\sigma_W}} \frac{1}{L_g} \}$ we have that
	\begin{align}\label{eqn:BoundIteration_1}
		z_{t+1} \preceq \Phi(\alpha) z_t + d_t,
	\end{align}
	where $\preceq$ denotes element-wise inequality, and the matrix $\Phi(\alpha)$ and vector $d_t$ are defined as
	
	\begin{align}
		\Phi(\alpha) = \begin{bmatrix}
		1 - \frac{\mu}{n}\alpha & \frac{L_g}{\sqrt{n}}\alpha & 0 \\
		0 & \sigma_W & \sigma_W \\
		3\sqrt{n}L_g\alpha & L_g(2 + \sigma_W)\alpha & \sqrt{\sigma_W}\\
		\end{bmatrix}, \text{ and }\nonumber
	\end{align}
	
	\begin{align}
		d_t = [ \|Ax_t^\ast - x_{t+1}^\ast\|, 0, \alpha \|g_{t+1}(\mathbf{1}\otimes Ax_t^\ast) - g_{t}(\mathbf{1}\otimes x_t^\ast)\| ]^T. \nonumber
	\end{align}
	Using inequality~\eqref{eqn:BoundIteration_1}, we have that $z_{1} \preceq \Phi(\alpha) z_0 + d_0$, and, therefore, $z_{2} \preceq \Phi(\alpha) z_1 + d_1 \preceq \Phi(\alpha)^2 z_0 + \Phi(\alpha) d_0 + d_1$. Repeating this process until time $t$, we obtain that
	
	\begin{align}\label{eqn:BoundIteration_2}
		z_t \preceq \Phi(\alpha)^t z_0 + \sum_{j = 0}^{t - 1} \Phi(\alpha)^{t-1-j} d_j.
	\end{align}
	Denote $\vec{d} = [C_w, 0, C_g]^T$. Since $\|Ax_t^\ast - x_{t+1}^\ast\| \leq C_w$ and $\|g_{t+1}(\mathbf{1}\otimes Ax_t^\ast) - g_{t}(\mathbf{1}\otimes x_t^\ast)\| \leq C_g$ for all $t$,  we have that all $d_j$'s in \eqref{eqn:BoundIteration_2} can be bounded as $d_j \preceq \vec{d}$. Therefore, we get that $\sum_{j = 0}^{t - 1} \Phi(\alpha)^{t-1-j} d_j \preceq \big( \sum_{j = 0}^{t - 1} \Phi(\alpha)^{t-1-j} \big) \vec{d} \preceq  \big( \sum_{t = 0}^{\infty} \Phi(\alpha)^{t} \big) \vec{d}$. According to \eqref{eqn:BoundIteration_2}, we have that 
	
	\begin{align}
		z_t \preceq \Phi(\alpha)^t z_0 + \big( \sum_{t = 0}^{\infty} \Phi(\alpha)^{t} \big) \vec{d} = \Phi(\alpha)^t z_0 + \big( I - \Phi(\alpha)\big)^{-1} \vec{d}, 	\nonumber
	\end{align}
	where the equality above is due to the fact $\big( I - \Phi(\alpha)\big)^{-1}  = \sum_{t = 0}^{\infty} \Phi(\alpha)^{t}$. From Lemma A.1,  we have that the spectral radius $\rho(\Phi(\alpha)) < 1$ when the stepsize $\alpha$ is selected as in~\eqref{eqn:StepsizeCondition}. Then, all the entries in the matrix $\Phi(\alpha)^t$ go to $0$ when $t$ goes to infinity.  Therefore, we have that
	
	\begin{align}
		\limsup_{t \rightarrow \infty} z_t \preceq \big( I - \Phi(\alpha)\big)^{-1} \vec{d}. \nonumber
	\end{align}
	From Lemma A.1, all the entries in $\big( I - \Phi(\alpha)\big)^{-1}$ are smaller than $\frac{1}{C_{inv}}\max\{c_{ij}\}$. Therefore, we have that each entry in $\limsup_{t \rightarrow \infty} z_t$ is smaller than $\frac{1}{C_{inv}} \max\{c_{ij}\}(C_w + C_g)$.
	Recalling the definition of $z_t$, since $\|x_{i,t} - x_t^\ast\| \leq \|x_{i,t} - \bar{x}_t\| + \|\bar{x}_t - x_t^\ast\|$, we have that 
	
	\begin{align}
		\limsup_{t \rightarrow \infty} \|x_{i,t} - x_t^\ast\| \leq \frac{2}{C_{inv}} \max\{c_{ij}\}(C_w + C_g). \nonumber
	\end{align}
	Therefore, it is straightforward to see that $\| x_{i,t} - x_t^\ast \|$ is also uniformly bounded for all $i$ and $t$. 
	By Assumption~\ref{assum:LipschitzGradient}, we have that $\|\nabla f_{i,t}(x_{i,t})  - \nabla f_{i,t}(x_t^\ast) \| \leq L_g \| x_{i,t} - x_t^\ast \| $. Furthermore, since  $\|\nabla f_{i,t}(x_{i,t})  - \nabla f_{i,t}(x_t^\ast) \| \geq \|\nabla f_{i,t}(x_{i,t})\| - \| \nabla f_{i,t}(x_{i,t}) \|$, we get that $\|\nabla f_{i,t}(x_{i,t})\| \leq \|\nabla f_{i,t}(x_t^\ast) \| +  L_g \| x_{i,t} - x_t^\ast \|$. Because both $\|\nabla f_{i,t}(x_t^\ast) \|$ and $\|\nabla f_{i,t}(x_t^\ast) \|$ are uniformly bounded, $\|\nabla f_{i,t}(x_{i,t})\|$ is also uniformly bounded. The proof is complete.
\end{proof}

Next, we present the theoretical upper bound on the dynamic regret of Algorithm~\ref{alg:oco_nprediction}.

\begin{thm}
	\label{thm:BoundRegret}
	Let Assumptions~\ref{assum:DynamicalMatrix}, \ref{assum:BoundedTimeVaryingFunction}, \ref{assum:LipschitzGradient},  \ref{assum:StrongConvexity},  and \ref{assum:DoublyStochasticity} hold. Moreover, select the stepsize $\alpha$ as in~\eqref{eqn:StepsizeCondition}. 
	Then, we have that the regret $R_T^d \leq K(C_1 + C_2 + C_3 + \mathcal{P}_T^A + \mathcal{V}_T^A)$, where $C_1 = \|\bar{x}_0 - x_0^\star\|$, $C_2 = \|x_0 - \mathbf{1} \otimes \bar{x}_0\|$, $C_3 = \|y_0 - \mathbf{1} \otimes \bar{y}_0\|$ and $K = 2n \frac{G}{C_{inv}} \max\{c_{ij}\}$, where the expressions of $C_{inv}$ and $c_{ij}$'s are provided in Lemma A.1.
\end{thm}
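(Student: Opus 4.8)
The plan is to first convert the function-value gaps that define $R_T^d$ into a sum of iterate distances, and then to bound those distances by summing the contraction recursion established in the preceding lemmas. Since Assumption~\ref{assum:StrongConvexity} makes each $f_t$ convex, for every agent $i$ and time $t$ we have $f_t(x_{i,t}) - f_t(x_t^\ast) \leq \nabla f_t(x_{i,t})^T (x_{i,t} - x_t^\ast) \leq \|\nabla f_t(x_{i,t})\| \, \|x_{i,t} - x_t^\ast\|$. Theorem~\ref{lem:BoundIteration} guarantees $\|x_{i,t} - x_t^\ast\| \leq C_z$, so every local gradient appearing in $\nabla f_t = \sum_{j=1}^n \nabla f_{j,t}$ is evaluated inside the $C_z$-neighborhood and is therefore bounded by $G$; hence $\|\nabla f_t(x_{i,t})\| \leq nG$. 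Substituting into the definition~\eqref{eqn:dynamic} and using $x_t^\ast = \arg\min_{x} f_t(x)$ gives the pointwise estimate $R_T^d \leq G \sum_{t=0}^T \sum_{i=1}^n \|x_{i,t} - x_t^\ast\|$.

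Next I would split each distance through the network average via the triangle inequality, $\|x_{i,t} - x_t^\ast\| \leq \|x_{i,t} - \bar{x}_t\| + \|\bar{x}_t - x_t^\ast\|$, and bound the aggregate consensus error by $\sum_{i=1}^n \|x_{i,t} - \bar{x}_t\| \leq \sqrt{n}\,\|x_t - \mathbf{1} \otimes \bar{x}_t\|$ (Cauchy--Schwarz), so that
\begin{align*}
R_T^d \leq G \sum_{t=0}^T \Bigl( \sqrt{n}\,\|x_t - \mathbf{1} \otimes \bar{x}_t\| + n\,\|\bar{x}_t - x_t^\ast\| \Bigr).
\end{align*}
Both summands are entries of the vector $z_t$, so the whole bound is controlled once I bound $\sum_{t=0}^T z_t$ componentwise. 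Lemmas~\ref{lem:DynamicTrackingError} and \ref{lem:DynamicNetworkError} were already packaged in the proof of Theorem~\ref{lem:BoundIteration} as the entrywise recursion $z_{t+1} \preceq \Phi(\alpha) z_t + d_t$ with $d_t = [\,\|Ax_t^\ast - x_{t+1}^\ast\|,\, 0,\, \alpha\|g_{t+1}(\mathbf{1}\otimes Ax_t^\ast) - g_t(\mathbf{1}\otimes x_t^\ast)\|\,]^T$, and the stepsize rule~\eqref{eqn:StepsizeCondition} together with Lemma~A.1 guarantees $\rho(\Phi(\alpha)) < 1$ and a nonnegative $\Phi(\alpha)$.

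The core computation is to unroll and sum this recursion. Iterating gives $z_t \preceq \Phi(\alpha)^t z_0 + \sum_{j=0}^{t-1} \Phi(\alpha)^{t-1-j} d_j$, and summing over $0 \leq t \leq T$, swapping the order of the double sum, and using nonnegativity of $\Phi(\alpha)$ yields $\sum_{t=0}^T z_t \preceq (I - \Phi(\alpha))^{-1}\bigl(z_0 + \sum_{j=0}^{T-1} d_j\bigr)$, where $\sum_{k\geq 0}\Phi(\alpha)^k = (I-\Phi(\alpha))^{-1}$ converges precisely because $\rho(\Phi(\alpha)) < 1$. Recognizing $z_0 = [C_1, C_2, C_3]^T$ and $\sum_{j=0}^{T-1} d_j \preceq [\mathcal{P}_T^A,\, 0,\, \alpha\mathcal{V}_T^A]^T$ from~\eqref{eqn:PathLength} and \eqref{eqn:GradPathLength}, and invoking Lemma~A.1's bound that every entry of $(I-\Phi(\alpha))^{-1}$ is at most $\frac{1}{C_{inv}}\max\{c_{ij}\}$, I would conclude that each of $\sum_t \|\bar{x}_t - x_t^\ast\|$ and $\sum_t \|x_t - \mathbf{1}\otimes\bar{x}_t\|$ is at most $\frac{1}{C_{inv}}\max\{c_{ij}\}(C_1 + C_2 + C_3 + \mathcal{P}_T^A + \alpha\mathcal{V}_T^A)$. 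Plugging these back and using $\sqrt{n} + n \leq 2n$ (and $\alpha \leq 1/L_g$ to absorb the $\alpha$ multiplying $\mathcal{V}_T^A$) produces the claimed bound with $K = 2n\frac{G}{C_{inv}}\max\{c_{ij}\}$.

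The main obstacle is the summation-of-the-recursion step rather than the regret reduction: I must be careful that the entrywise inequality $\preceq$ is preserved when multiplying by powers of $\Phi(\alpha)$ (which requires $\Phi(\alpha)$ to be entrywise nonnegative, true here), and that interchanging the order of summation and bounding finite partial sums by the full geometric series is legitimate, which again rests on $\rho(\Phi(\alpha)) < 1$. A secondary subtlety is justifying $\|\nabla f_t(x_{i,t})\| \leq nG$ uniformly in $t$, which is exactly where the boundedness conclusion of Theorem~\ref{lem:BoundIteration} (and hence Assumptions~\ref{assum:DynamicalMatrix} and \ref{assum:BoundedTimeVaryingFunction}) is indispensable.
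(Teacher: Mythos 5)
Your proposal is correct and follows essentially the same route as the paper: reduce $R_T^d$ to $nG\sum_t\|\bar{x}_t-x_t^\ast\|+\sqrt{n}G\sum_t\|x_t-\mathbf{1}\otimes\bar{x}_t\|$, then sum the entrywise recursion $z_{t+1}\preceq\Phi(\alpha)z_t+d_t$, swap the double sum, and bound every entry of $(I-\Phi(\alpha))^{-1}$ via Lemma~A.1. The only difference is cosmetic: you derive the regret-to-tracking-error reduction explicitly from convexity and the gradient bound of Theorem~\ref{lem:BoundIteration}, whereas the paper cites this step from Lemma~III.1 of \cite{ZhangCDC2019_OnlineOptimization} (and, like the paper, you silently drop the factor $\alpha$ multiplying $\mathcal{V}_T^A$, which strictly requires $\alpha\leq 1$ rather than $\alpha\leq 1/L_g$).
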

\begin{proof}
	Using Lemma~\ref{lem:BoundIteration} and following the steps in the proof of Lemma III.1 in \cite{ZhangCDC2019_OnlineOptimization}, we have that the regret $R_T^d$ is upper bounded by
	
	\begin{equation}\label{eqn:RegretBound_1}
		\begin{matrix}
		R_T^d \leq & \hspace{-2mm} n G \sum_{t=0}^{T} \|\bar{x}_t - x_t^\star\| & \hspace{-2mm} + & \hspace{-3mm} \sqrt{n} G \sum_{t=0}^T \|x_t - \mathbf{1} \otimes \bar{x}_t\|. 
		\end{matrix}
	\end{equation}
	Recall also the definition of the variable $z_t = [\|\bar{x}_{t} - x_t^\ast\|, \|x_t - \mathbf{1}\otimes\bar{x}_t\|, \|y_t - \mathbf{1}\otimes\bar{y}_t\|]^T$. Our goal is  to bound $\sum_{t = 0}^T z_t$. To do so, we sum up the inequality \eqref{eqn:BoundIteration_2} over $t$ to get
	
	\begin{align}
		\sum_{t = 0}^T z_t & \preceq \sum_{t = 0}^T \Phi(\alpha)^t z_0 + \sum_{t = 1}^T \sum_{j = 0}^{t - 1} \Phi(\alpha)^{t-1-j} d_j \nonumber \\
		& =\sum_{t = 0}^T \Phi(\alpha)^t z_0 + \sum_{j = 0}^{T-1} \sum_{t = 0}^{T - 1 - j} \Phi(\alpha)^{t} d_j \nonumber \\
		& \preceq \sum_{t = 0}^\infty \Phi(\alpha)^t z_0 + \sum_{t = 0}^{\infty} \Phi(\alpha)^{t}  \sum_{j = 0}^{T-1} d_j,	
	\end{align}
	where the second equality follows from rearranging terms and the third inequality is
	due to the fact that every entry in the matrix $\sum_{t = 0}^J \Phi(\alpha)^t$ is smaller than the corresponding entry in the matrix $\sum_{t = 0}^\infty \Phi(\alpha)^t$ for a finite number $J$. Since $\sum_{t = 0}^{\infty} \Phi(\alpha)^{t} = \big( I - \Phi(\alpha)\big)^{-1} $, using Lemma A.1, we have that every entry in the vector $\sum_{t = 0}^T z_t$ is bounded by
	
	\begin{align}\label{eqn:RegretBound_2}
		\frac{1}{C_{inv}} \max\{ c_{ij} \} \big( C_1 + C_2 + C_3 + \mathcal{P}_T^A + \mathcal{V}_T^A \big),
	\end{align}
	where the expressions of $C_{inv}$ and $c_{ij}$'s are provided in Lemma A.1. 
	Using the definition of the vector $z_t$, we can combine the bound on the entries of $z_t$ in \eqref{eqn:RegretBound_2} with the inequality~\eqref{eqn:RegretBound_1}. The proof is complete.
\end{proof}

\begin{rem}
	The upper bound on the stepsize $\alpha$ in \eqref{eqn:StepsizeCondition} depends on the global network parameter $\sigma_W$, which is usually unknown at each local agent. A similar dependency also appears in other distributed online opitmization methods,\cite{zhao2019decentralized,shahrampour2018distributed,nazari2019dadam}. In practice, since $\sigma_W$ only affects the upper bound on the stepsize, we can always select a stepsize small enough to satisfy \eqref{eqn:StepsizeCondition}. Alternatively, we can compute the upper bound in \eqref{eqn:StepsizeCondition}, for a fixed communication matrix $W$ by letting  the agents estimate $W$ through multiple rounds of communication before running the algorithm.
\end{rem}

\section{Numerical Experiments}
In this section, we investigate the performance of our proposed algorithm with numerical experiments based on a target tracking example. Specifically, we consider a sensor network consisting of $n=6$ nodes that collaboratively track $3$ time-varying signals of sinusoidal shape. Each signal $x_{j,t}^\ast$ can be written following form

\begin{equation*}
	\label{eqn:SinusoidalSignal}
	x_{j,t}^\ast = \begin{bmatrix}
	p_{j,t} \\ \dot{p}_{j,t}
	\end{bmatrix} = 
	\begin{bmatrix}
	A_j \sin(\omega_{j} t + \phi_j) \\ \omega_j A_j \cos (\omega_{j}t + \phi_j)
	\end{bmatrix},
\end{equation*}
where $p_{j,t}$ is the position, $\dot{p}_{j,t}$ is the velocity of target $j$, $A_j$ is the amplitude, $\omega_j$ is the angular frequency, and $\phi_j$ is the phase of the signal. Each signal is subject to an ODE with noise

\begin{equation}
	\label{eqn:SinusoidalODE}
	\dot{x}_{j,t}^\ast = \begin{bmatrix}
	0 & 1 \\ -\omega_{j}^2 & 0
	\end{bmatrix} x_{j,t}^\ast + w_{j,t},
\end{equation}
where $w_j$ is a zero mean Gaussian noise. The matrix governing the dynamical system in this example can be estimated by discretizing the solution of the ODE \eqref{eqn:SinusoidalODE}. In the simulation, the amplitudes $\{A_j\}$ and inital phases are uniformly generated from the intervals $[0, 2]$ and $[0, \pi]$, respectively. The doubly stochastic matrix $W$ used in step \eqref{eq:Algorithm_step1} is randomly generated and $\sigma_W = 0.8554$. The sampling frequency is set to $100$Hz. The measurement model of sensor $i$ at time $t$ is

\begin{equation}
	\label{eqn:Measurement}
	y_{i,t} = C_{i} x_{t}^\ast,
\end{equation}
where $x_t^\ast = [\dots; x_{j,t}^\ast; \dots]$ is the true target state, $y_{i,t} \in \mathbb{R} $ is the observation at sensor $i$, and $C_i \in \mathbb{R}^{1 \times 6}$ is the measurement matrix that is randomly generated. At time $t$, the global problem is defined as

\begin{equation}
	\label{eqn:GlobalSensing}
	\min_{x_t} \frac{1}{2} \sum_{i = 1}^{6} |C_{i}x_t - y_{i,t}|^2 \triangleq \frac{1}{2} \|Cx_t - y_t\|^2,
\end{equation}
where the matrix $C$ and the vector $y_t$ stack all measurement matrices $C_i$ and local measurements $y_{i,t}$. We assume that $C^{T}C$ is positive definite in order to guarantee that the objective is strongly convex. This guarantees that the target state $x_t$ is determined uniquely given all local observations at time $t$. 
\begin{figure}
	\centering
	\subfigure[Periodicity $1000$s]{\includegraphics[width=.8\columnwidth]{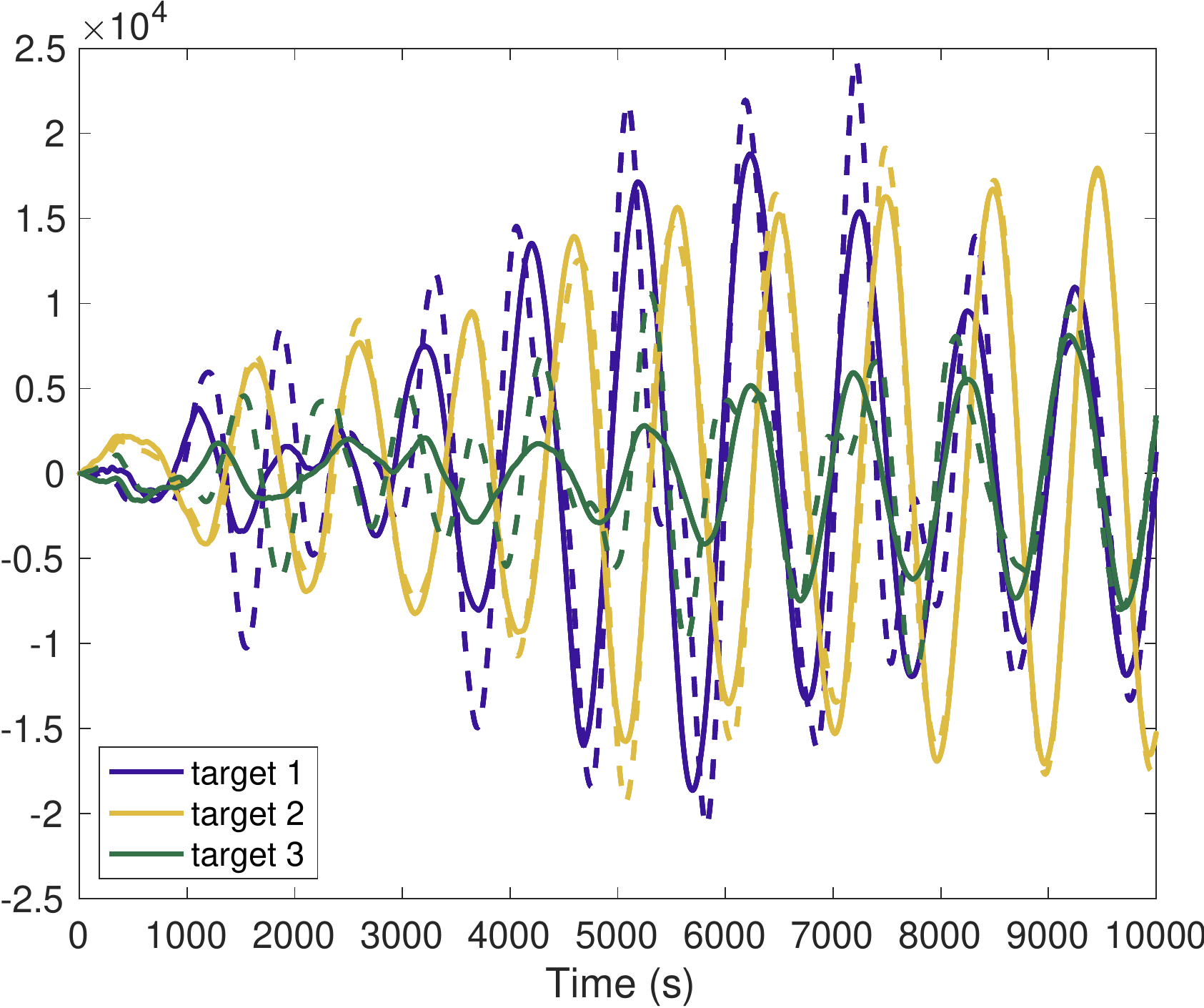}}
	\subfigure[Periodicity $10$s]{\includegraphics[width=.8\columnwidth]{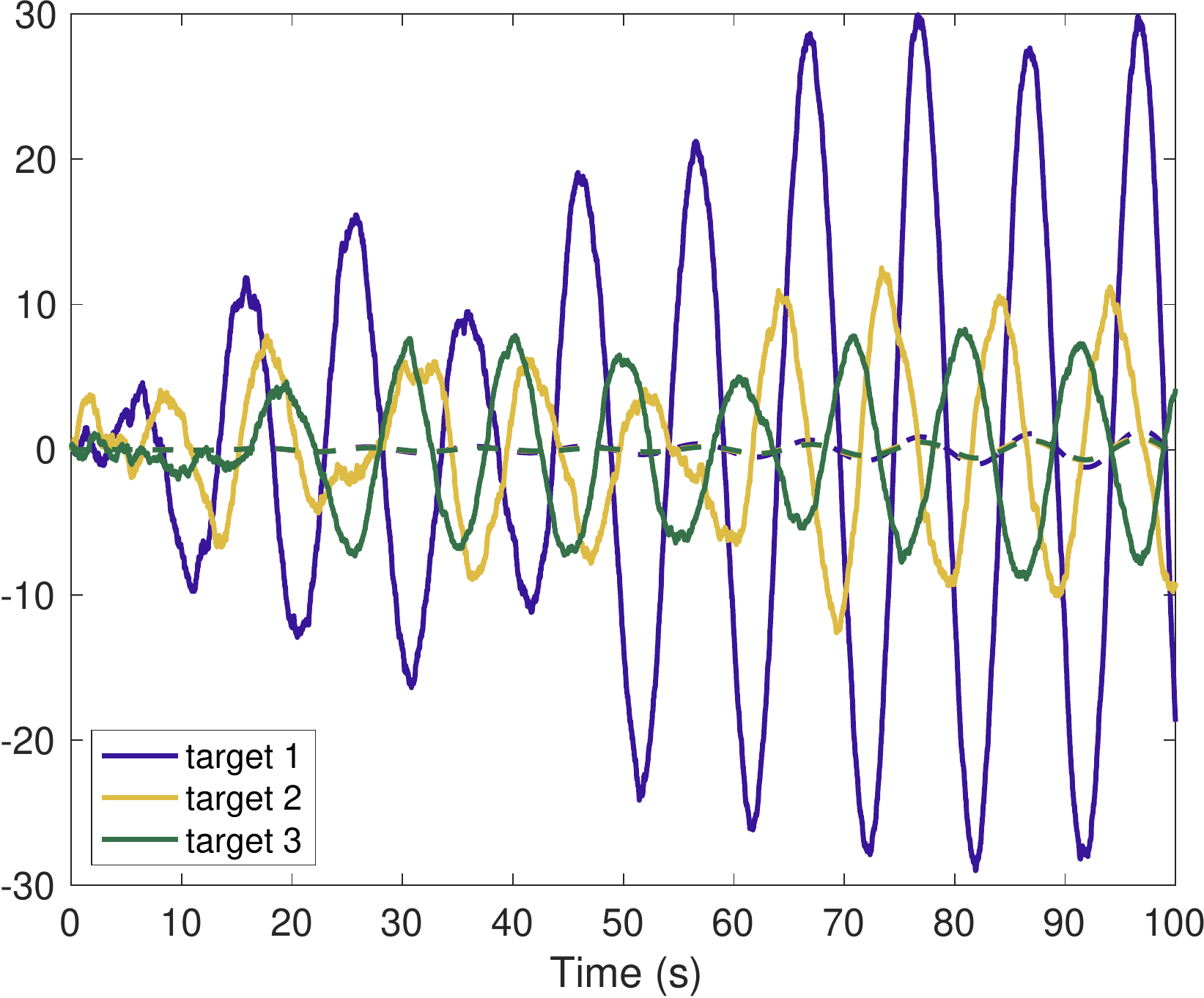}}	
	\caption{Tracking performance of Algorithm~\ref{alg:oco_nprediction} with stepsize being selected according to the theoretical bound in Theorem~\ref{thm:BoundRegret}. In both figures, the dashed colored curves represent the true positions of the three targets. The solid curves represent the position estimates of the three targets at local agent $1$.}. 
	\label{fig:tracking_theoreticalStepsize}
\end{figure}

\subsection{Choice of stepsize}
In this section, we track signals defined as in \eqref{eqn:SinusoidalODE} with periodicity $1000$s and $10$s using Algorithm~\ref{alg:oco_nprediction} and assuming that $A$ is the identity matrix, i.e. we do not take any predictability of the future into account. In both experiments, we let the step size $\alpha = 2.45 \times 10^{-5}$ in accordance with the assumptions of Theorem~\ref{thm:BoundRegret}. The tracking performance is shown in Figure~\ref{fig:tracking_theoreticalStepsize}. We observe that using this theoretical stepsize, Algorithm~\ref{alg:oco_nprediction} performs better in tracking signals with periodicity $1000$s than $10$s. 
This is unsurprising; the motion of the targets is sampled more frequently when the periodicity is 1000s.
In particular, the theoretical step size $\alpha$ is too conservative, preventing our algorithm from tracking quickly moving targets. However, we can use larger stepsizes than the theoretical ones given in Theorem~\ref{thm:BoundRegret} and achieve better performance. Specifically, in Figure~\ref{fig:tracking_EmpStep} (a), we observe that for a more aggressive selection of the stepsize, the accumulated regret becomes smaller, implying better tracking performance. And in Figure~\ref{fig:tracking_EmpStep} (b), we show that the tracking performance of Algorithm~\ref{alg:oco_nprediction} for the target signal of periodicity $10$s can be improved using $\alpha = \frac{1}{2L_g} \approx 0.2$.
\begin{figure}
	\centering
	\subfigure[Regret under different stepsizes]{\includegraphics[width=.8\columnwidth]{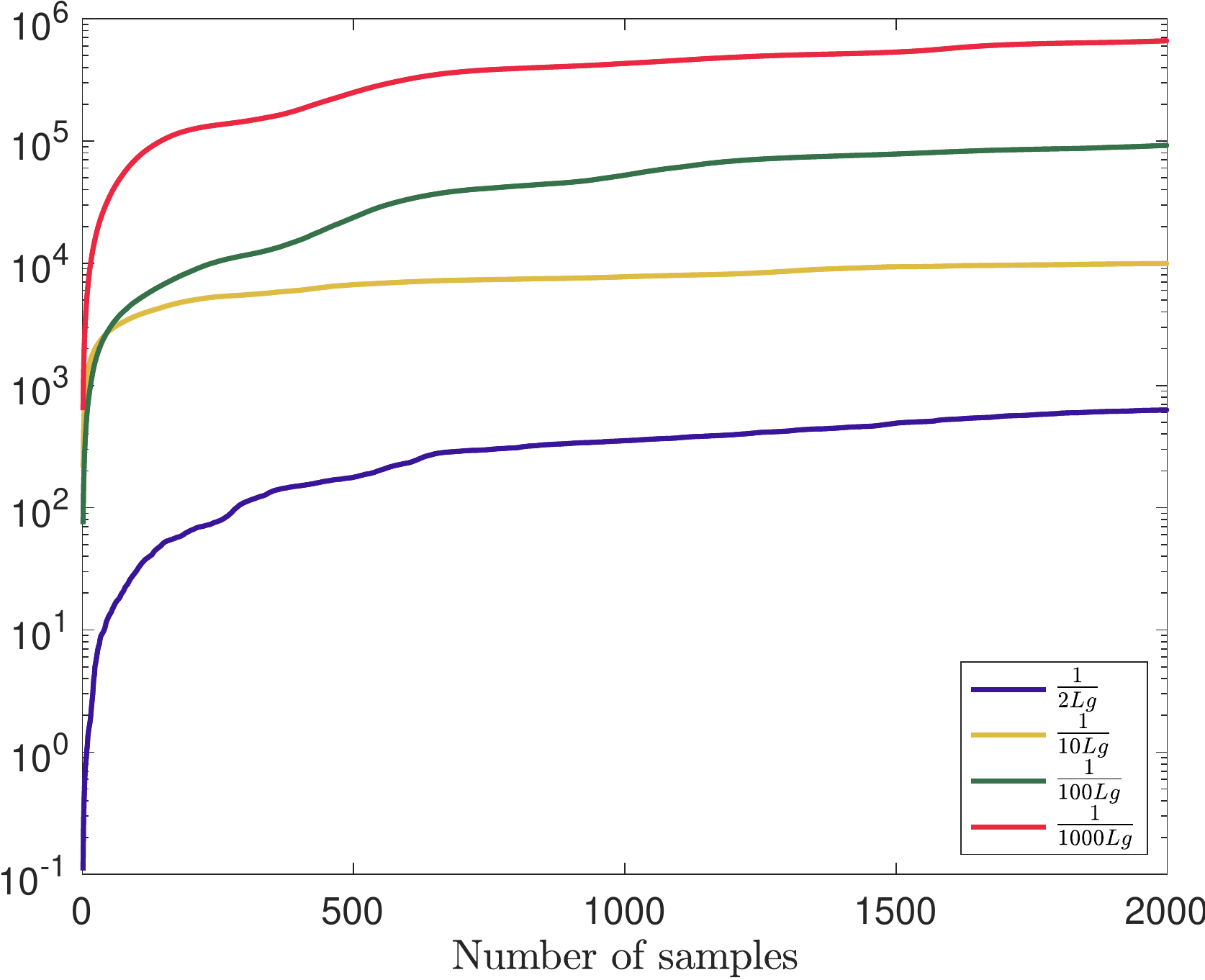}}
	\subfigure[Tracking performance when $\alpha = \frac{1}{2L_g}$]{\includegraphics[width=.8\columnwidth]{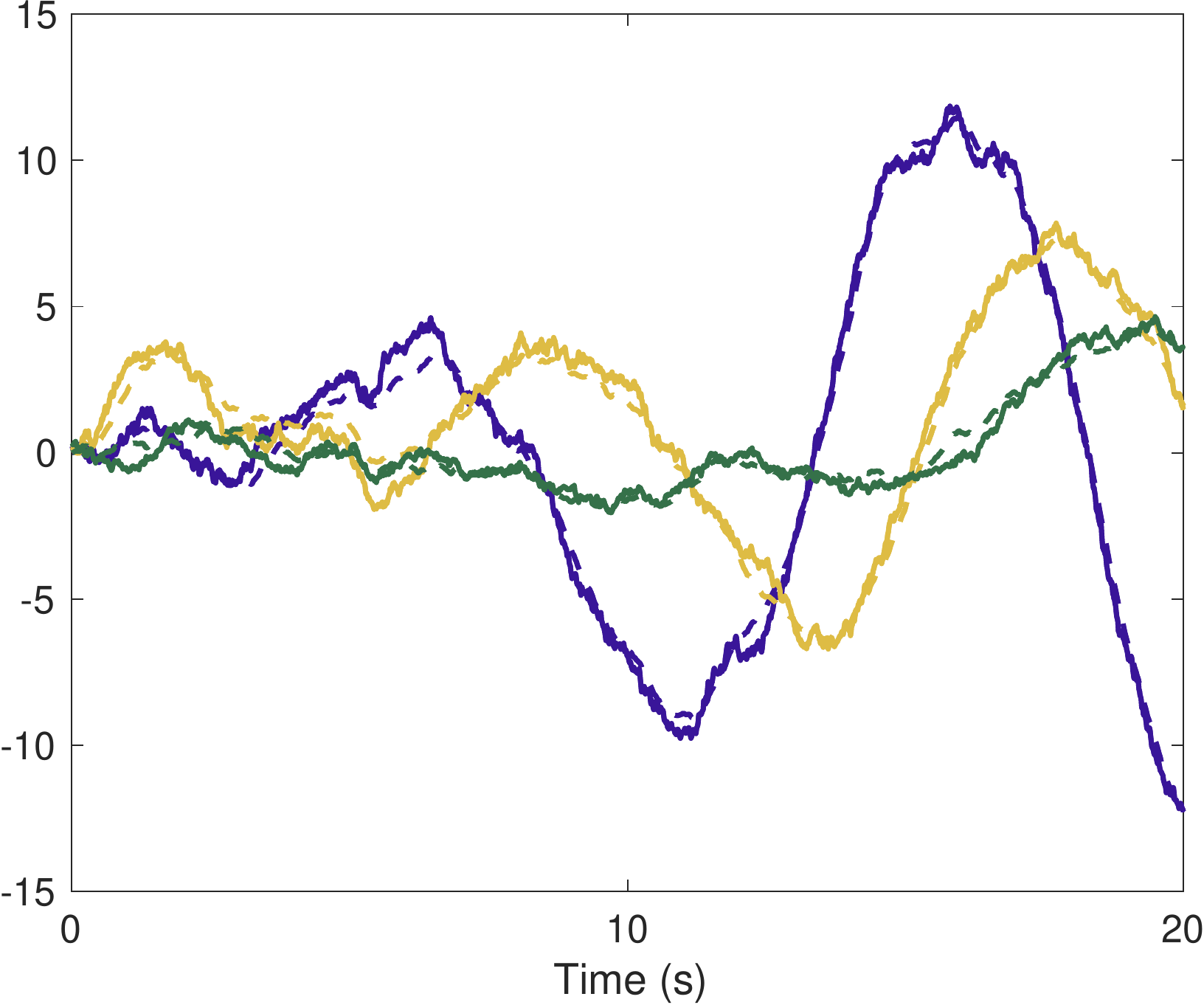}}	
	\caption{Tracking performance of Algorithm~\ref{alg:oco_nprediction} with empirical stepsizes $\alpha = [\frac{1}{2L_g}, \frac{1}{10L_g}, \frac{1}{100L_g}, \frac{1}{1000L_g}]$ and target signals are of periodicity $10$s. In Figure (b) the dashed colored curves represent the true positions of the three targets. The solid curves represent the position estimates of the three targets at local agent $1$.}
	\label{fig:tracking_EmpStep}
\end{figure}


\subsection{Effect of prediction}
We also investigated the role of the prediction step in the empirical performance of Algorithm~\ref{alg:oco_nprediction} by comparing numerical prediction of the dynamical system based on discretization as mentioned above to the case without prediction, i.e. where $A$ in ~\eqref{eq:Algorithm_step2} is the identity matrix. The comparison was conducted by tracking targets of periodicity $10s$ with stepsize $\alpha = \frac{1}{2L_g}$. The tracking performance and dynamic regret are presented in Figure~\ref{fig:tracking_WtPredict}. In Figure~\ref{fig:tracking_WtPredict}(a), we observe that Algorithm~\ref{alg:oco_nprediction} that incorporating numerical prediction can greatly improve the dynamic regret of an algorithm compared to not incorporating any prediction (note that the figure is in log-scale), though note that the shape of both cumulative regret curves are roughly the same. This improvement is further corroborated in Figures~\ref{fig:tracking_WtPredict} (b) and (c); the dashed curves indicating the actual optimal points are generally overlapped by the solid curves of the estimated positions when prediction is utilized, whereas there is little overlap if no prediction is used.

\subsection{Comparison with existing algorithms}
We now compare the performance Algorithm~\ref{alg:oco_nprediction} to the online distributed gradient (ODG) algorithm studied in \cite{shahrampour2018distributed}. Both algorithms are implemented with prediction. The regrets achieved by these algorithms under different stepsizes are presented in Figure~\ref{fig:Compare2Ali}. Though both algorithms have comparable performance for the smaller step sizes, the behavior is wildly different for the larger step sizes. While ODG diverges for stepsizes $\frac{1}{L_g}$ and $\frac{2}{L_g}$, Algorithm~\ref{alg:oco_nprediction} is stable using these stepsizes and can further improve the accumulated regret compared to its implementation with small stepsizes. This behavior was also observed in multiple other randomly generated examples, and is consistent with that observed in \cite{ZhangCDC2019_OnlineOptimization}.

\begin{figure}
	\centering
	\subfigure[Regret of Algorithm~\ref{alg:oco_nprediction} with or without prediction]{\includegraphics[width=.8\columnwidth]{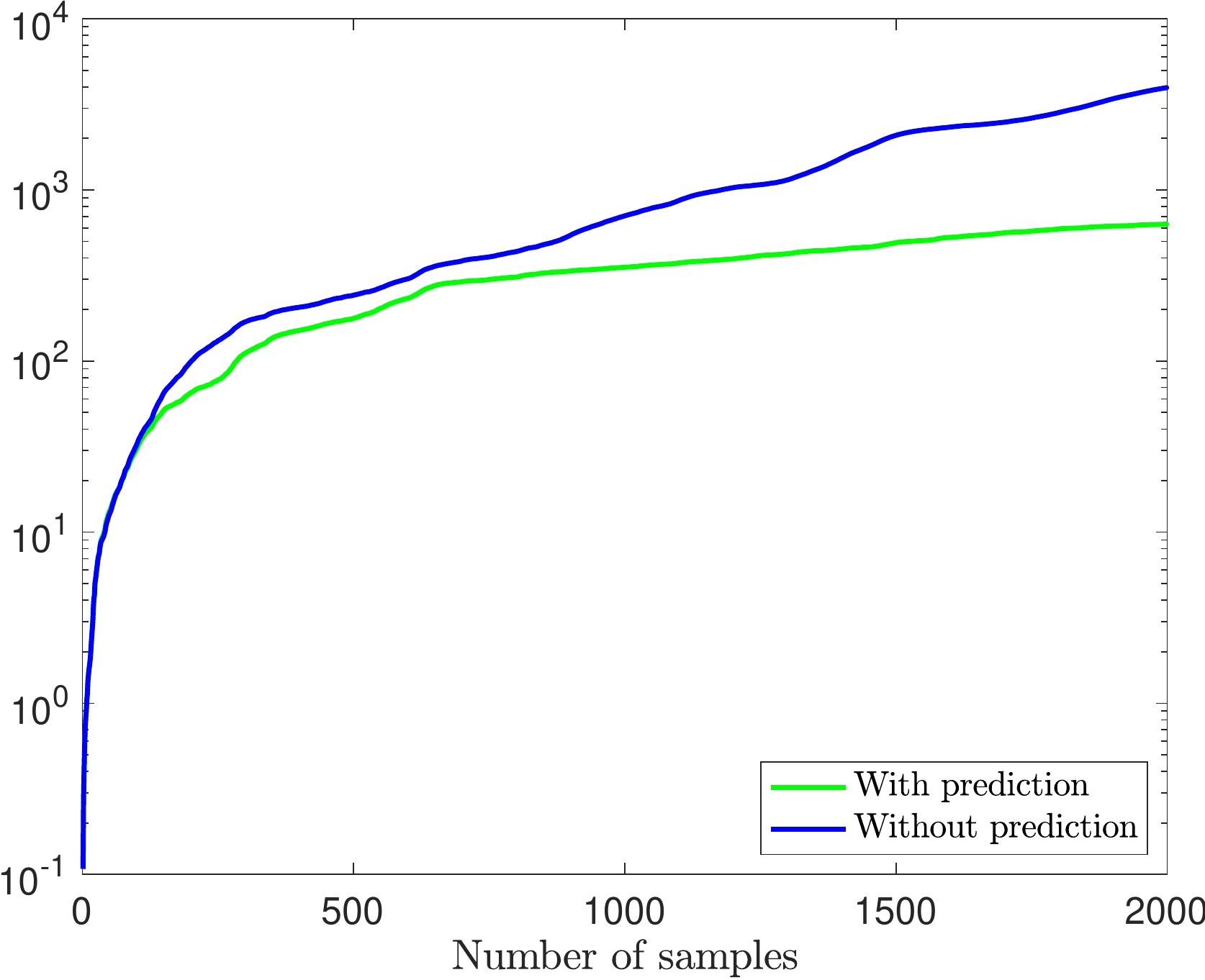}}
	\subfigure[Tracking performance without prediction]{\includegraphics[width=.45\columnwidth]{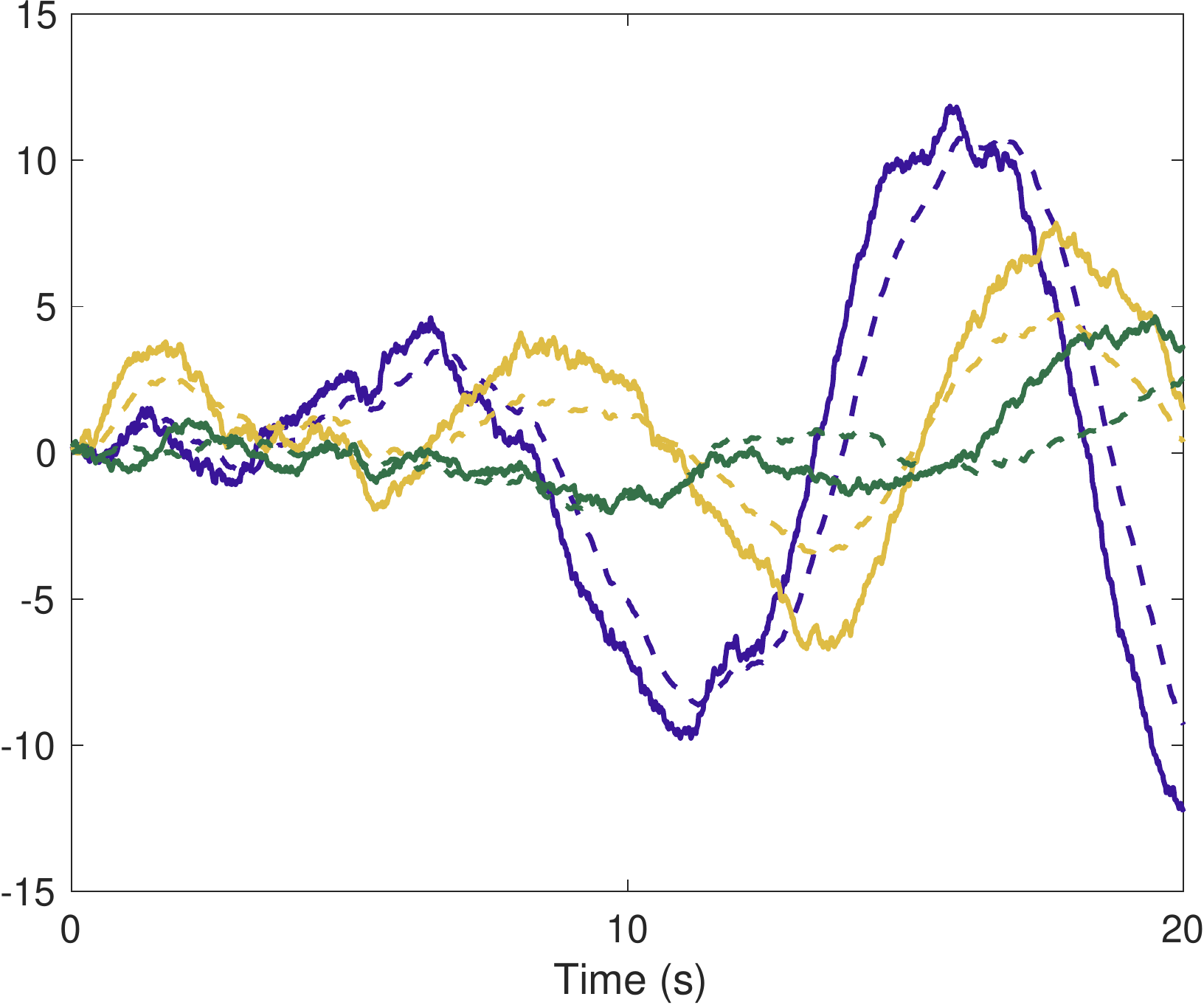}}
	\subfigure[Tracking performance with prediction]{\includegraphics[width=.45\columnwidth]{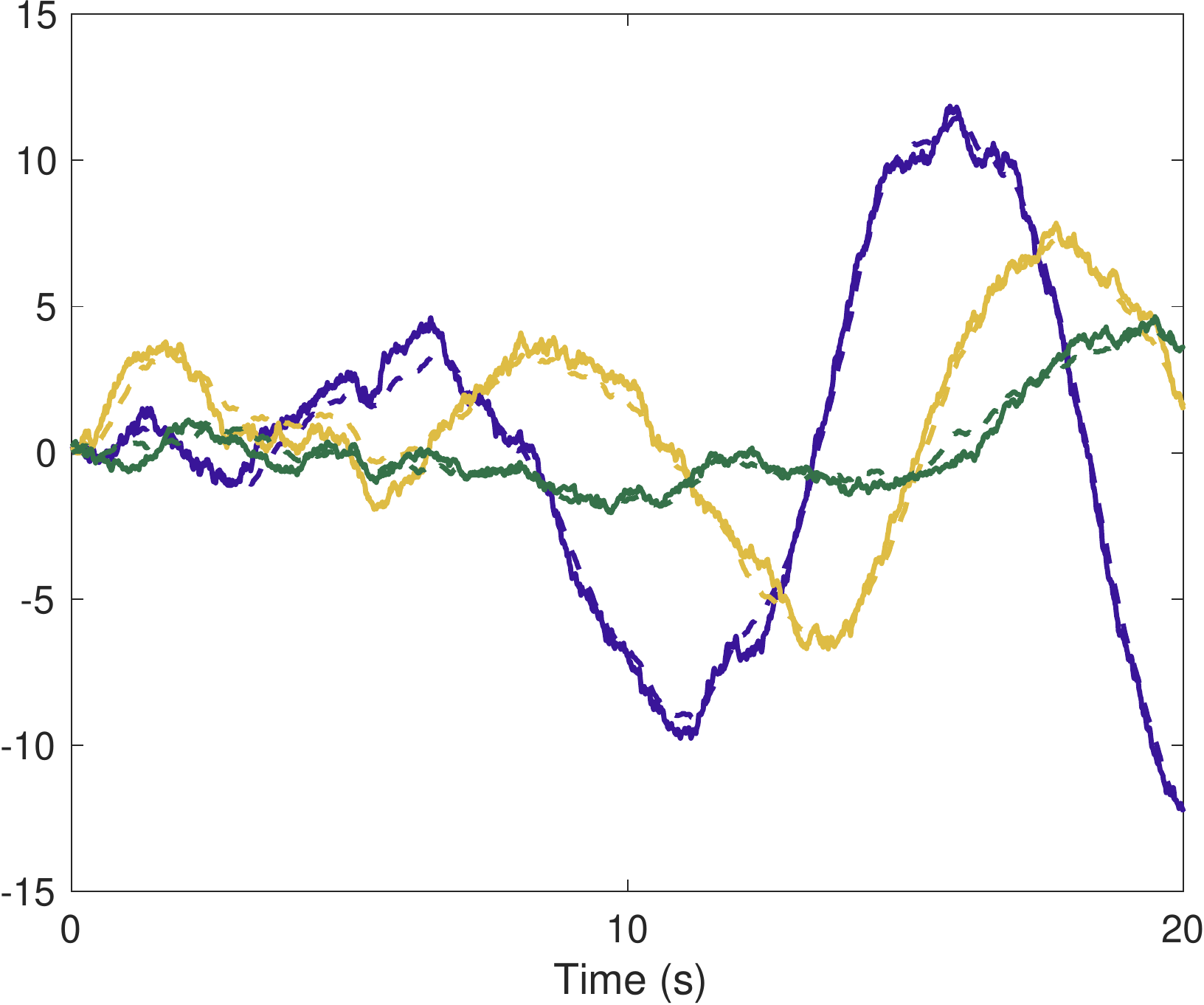}}				
	\caption{Comparison of regret and tracking performance of Algorithm~\ref{alg:oco_nprediction} with and without prediction, stepsizes $\alpha = \frac{1}{2L_g}$ and target signals are of periodicity $10$s.}
	\label{fig:tracking_WtPredict}
\end{figure}

\subsection{Necessity of the gradient path-length regularity}
Since the regret bound in \cite{shahrampour2018distributed} is only related to the optimizer path-length regularity term $\mathcal{P}_T^A$, it is natural to ask whether the dependence of the theoretical regret bound of Algorithm~\ref{alg:oco_nprediction} on $\mathcal{V}_{T}$ can be removed. In the following, we empirically show that the regret bound $R_T^d$ of using Algorithm~\ref{alg:oco_nprediction} not only depends on $\mathcal{P}_T^A$ in general, but also on $\mathcal{V}_{T}.$ To do this, we construct a numerical example for which $\mathcal{P}_T^A = 0$ for all $T$ but $R_T^d$ grows at the same speed as the regularity term $\mathcal{V}_T$. 

Consider the distributed estimation problem shown in Figure~\ref{fig:RotatingRods}, where two targets are moving along circular paths around the origin. Let $\theta_1(t)$ and $\theta_2(t)$ denote their angles from the horizontal axis, and let $R_{1}$ and $R_{2}$ denote the distances of the targets with from the origin. Four sensors need to estimate these distances, though each sensor can only measure the projected coordinate of one target onto one axis. Letting $y_t$ denote the collected measurement at time $t,$ we assume the $y_{t}$ received are of the following form

\begin{equation}
\label{eqn:MeasurementModel}
\begin{split}
y_t &= \begin{bmatrix}
\cos \theta_1(t) & 0 \\ \cos \theta_2(t) & 0 \\ 0 & \sin \theta_1(t) \\ 0 & \sin \theta_2(t)
\end{bmatrix}
\begin{bmatrix}
R_1 \\ R_2
\end{bmatrix} + v_t \\
&= C_t \; \begin{bmatrix}
R_1 \\ R_2
\end{bmatrix} + v_t,
\end{split}
\end{equation}
where each entry of $y_t$ signifies one sensor's measurement and $v_t$ is a noise term at time $t$ to be specified.
We assume that $R_1$ and $R_2$ are constant, so that the matrix governing the dynamical system in Assumption~\ref{assum:DynamicalMatrix} is the identity matrix. We also assume that the four sensors are connected in a cyclic graph. We wish to investigate the performance of Algorithm~\ref{alg:oco_nprediction} in tracking the optimizer of the global estimation problem

\begin{equation}
\label{eqn:GlobalProblem}
\min_{x_t} \; f_t(x_t) := \frac{1}{2} \sum_{i=1}^{4} \| [C_t]_i x_t - [y_t]_i \|^2
\end{equation}
\noindent where $[C_t]_i$ denotes the $i$th row of the matrix $C_t$.

We specifically design the noise terms $v_t$ in the measurement model \eqref{eqn:MeasurementModel} so that the optimizer $x_t^\ast$ of the objective is constant for all $t$, which implies that $\mathcal{P}_{T}^{A} = 0.$ To achieve this, observe that we can write the first order optimality condition as

\begin{equation}
\label{eqn:FirstOrderOptimality}
\nabla f_t(x_t) |_{x_t = x_t^\ast} = C_t^T (C_t x_t^\ast - y_t) = 0.
\end{equation}
Replacing $y_t$ in \eqref{eqn:FirstOrderOptimality} with \eqref{eqn:MeasurementModel}, we have that

\begin{equation}
\label{eqn:Derivation_1}
C_t^T(C_t x_t^\ast - C_t \begin{bmatrix}
R_1 \\ R_2
\end{bmatrix} - v_t) = 0.
\end{equation}
Thus, to ensure that $x_t^\ast = [R_1, R_2]^T$ for all $t,$ we see that $v_t$ must lie in the kernel of the matrix $C_t^T$. 

In our simulation, we let the vector $v_t$ be a unit vector in the kernel space of $C_t^T$, guaranteeing that $x_t^\ast = [R_1, R_2]^T$ for every $t$ and $\mathcal{P}_T^{A} = 0$ for all $T$ as mentioned above. However, the regularity term $\mathcal{V}_T^A$ is nonzero and grows with time because 

$$\nabla f_{i,t}(x_t^\ast) = [C_t]_i^T([C_t]_ix_t^\ast - [y_t]_i) = [v_t]_i [C_t]_i^T$$ so 

\begin{align*}
&\|g_{t+1}(Ax_{t}^\ast) - g_{t}(x_{t}^\ast)\| \\
&= \sum_{i=1}^{4}\|[v_{t+1}]_i [C_{t+1}]_i^T - [v_t]_i [C_t]_i^T\|^2 > 0
\end{align*} 
for all $t$. 

\begin{figure}[t]
	\centering
	\includegraphics[width=.8\columnwidth]{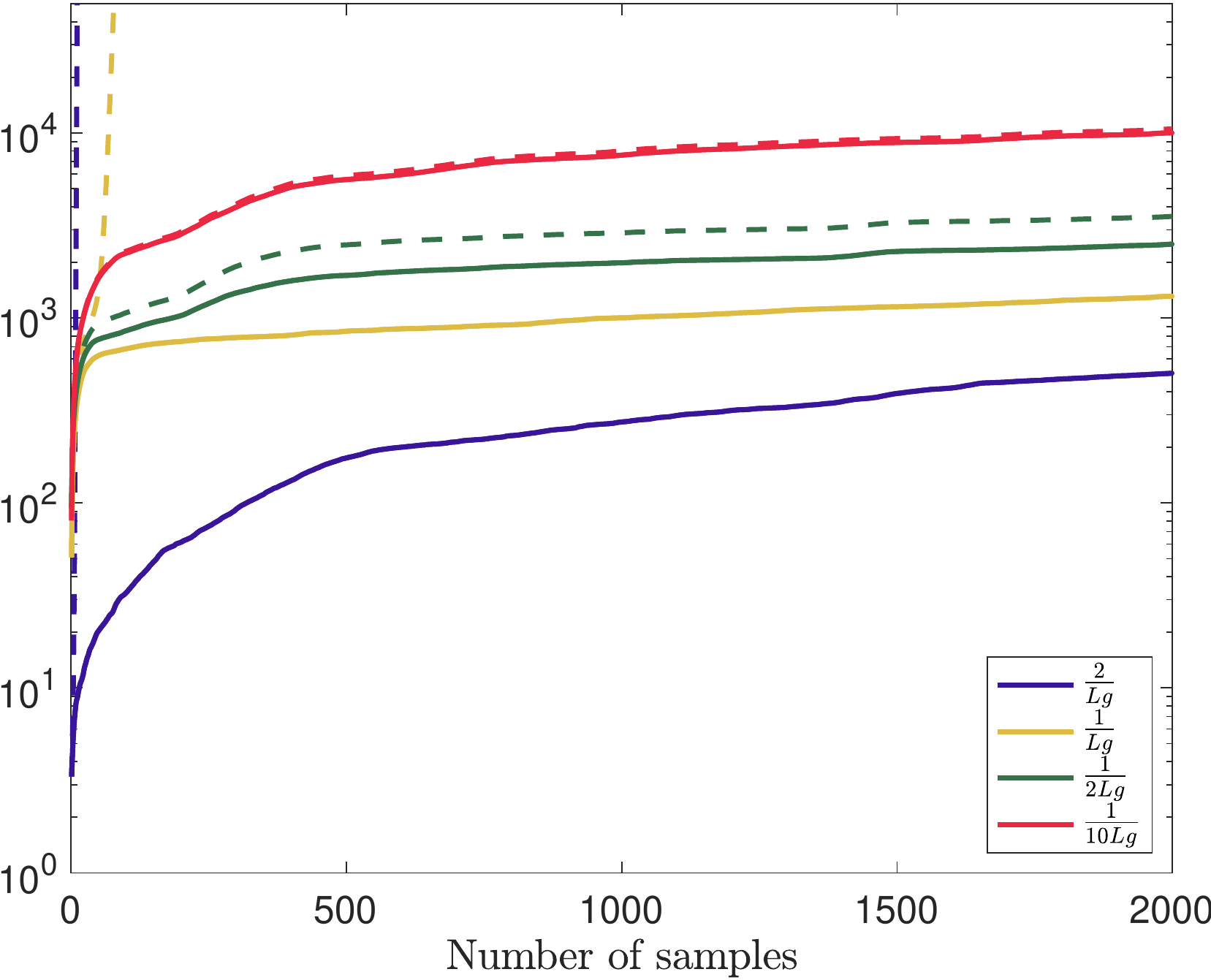}
	\caption{Comparison of the regrets between Algorithm~\ref{alg:oco_nprediction} (solid lines) and the online distributed gradient algorithm in \cite{shahrampour2018distributed} (dashed lines). Both algorithms are run with stepsizes $\alpha = [\frac{2}{Lg}, \frac{1}{Lg}, \frac{1}{2Lg}, \frac{1}{10Lg}]$.}
	\label{fig:Compare2Ali}
\end{figure}

\begin{figure}[t]
	\centering
	\includegraphics[width=.5\columnwidth]{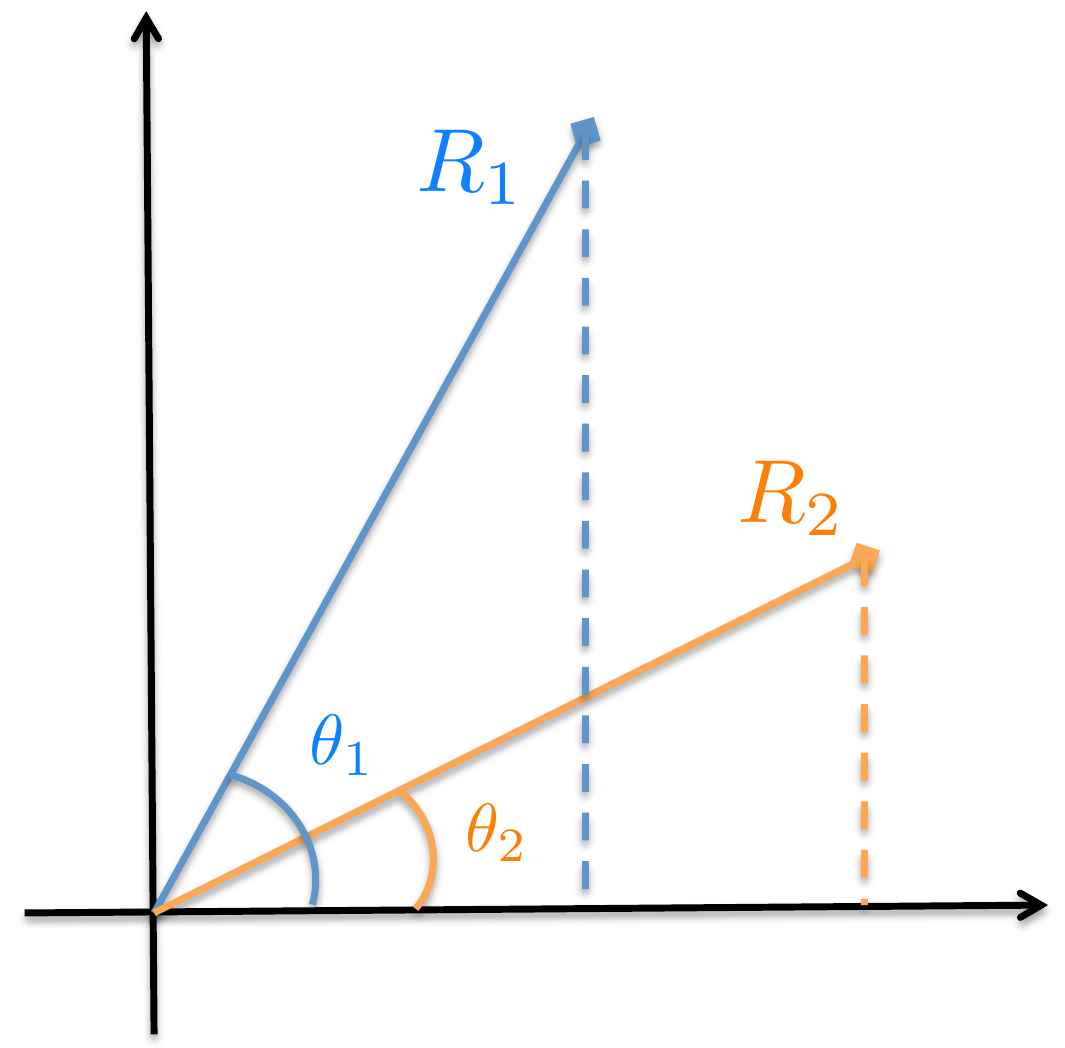} 
	\caption{Estimation of the lengths of two rotating rods}
	\label{fig:RotatingRods}
\end{figure}

We run both Algorithm~\ref{alg:oco_nprediction} and ODG to track the optimizer of problem~\eqref{eqn:GlobalProblem} for $T = 1\times10^4$ time steps. The regret curves together with the regularity curve of $\mathcal{V}_T$ are shown in Figure~\ref{fig:Regret_Regularity}. It is clear that the regret $R_d^T$ grows with the same rate as the regularity term $\mathcal{V}_T$. Similar behavior was noticed when increasing $T$ to $1 \times 10^6.$ This is because of the gradient estimator $y_{i,t}$ in \eqref{eq:Algorithm_step3} in Algorithm~\ref{alg:oco_nprediction}. Each local $y_{i, t}$ estimates the summation of the local gradients $\sum_i \nabla f_{i,t}(x_{i,t})$. However, even when this term equals 0,  if $ \nabla f_{i, t+1}(x_{i,t+1}) - \nabla f_{i, t}(x_{i,t}) \neq 0$ for all $t$, the consensus estimator $y_{i,t}$ is continuously perturbed away from the correct estimate. This perturbation error is accumulated in the regret through the gradient update in \eqref{eq:Algorithm_step1}. 

It is interesting to note that even though ODG does not apply the gradient estimator $y_{i,t}$ in Algorithm~\ref{alg:oco_nprediction}, it also suffers from such perturbations as shown in Figure~\ref{fig:Regret_Regularity}. It is most likely that the error from the perturbations is accumulated directly in the local gradient calculation, though we will not investigate this further and will leave this for future work.

\begin{figure}[t]
	\centering
	\includegraphics[width = 0.8\columnwidth]{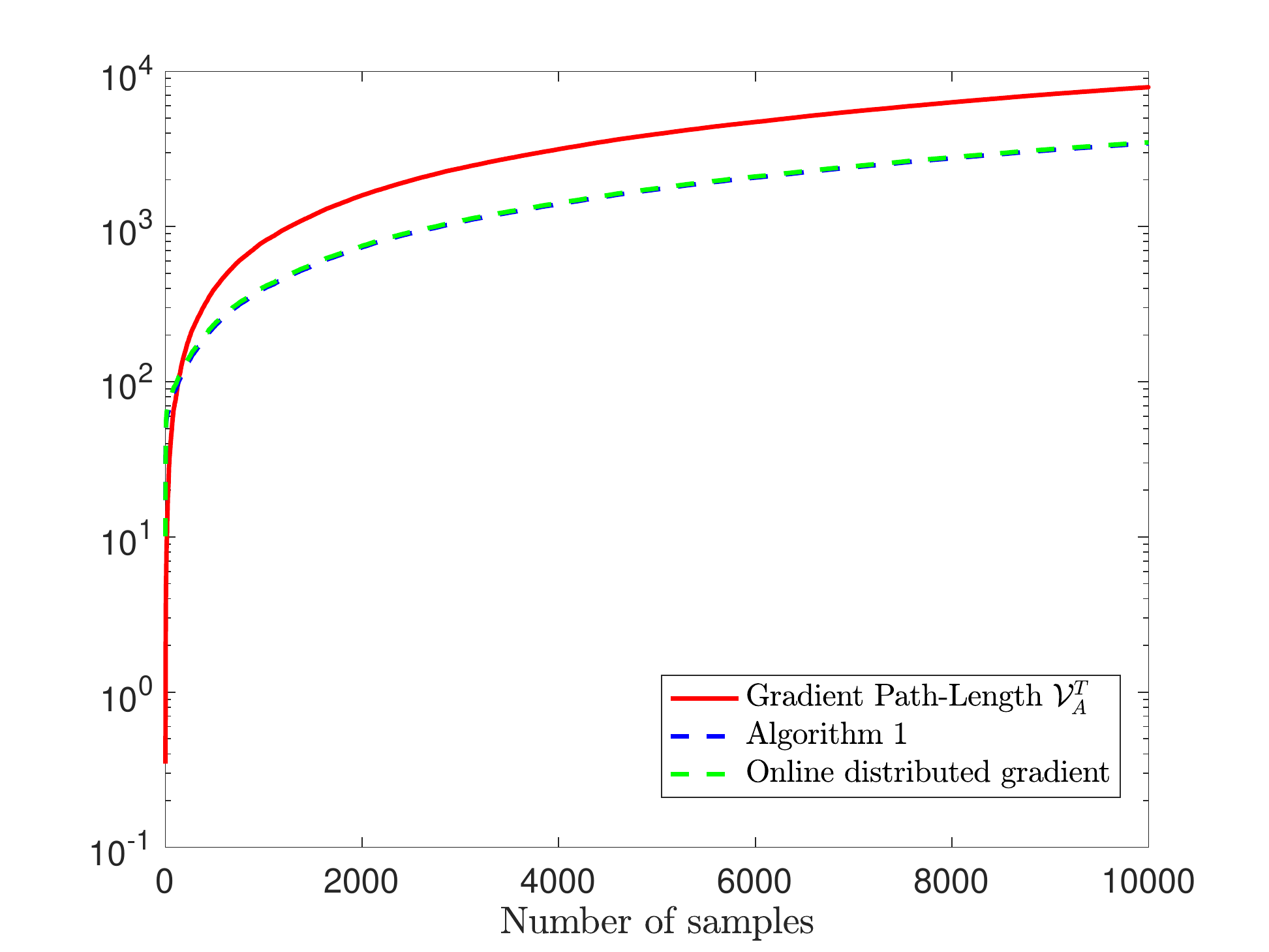}
	\caption{The accumulation of the regret $R_d^T$ (green line) and the gradient path-length regularity $\mathcal{V}_T$ (red line) by running Algorithm~\ref{alg:oco_nprediction} to solve problem~\eqref{eqn:GlobalProblem}.}
	\label{fig:Regret_Regularity}	
\end{figure}

\section{Conclusions}
In this paper, we proposed a novel algorithm to solve distributed online convex optimization problems. 
We adapted a gradient tracking step previously studied in static optimization methods to the online setting. When the global objective function is strongly convex, we showed that the dynamic regret of the proposed algorithm is upper bounded by a quantity that does not specifically depend on the problem horizon. This bound is tighter than existing bounds especially for long horizon problems. In addition, we proposed a new regularity measure for the time-varying optimization problem, the accumulated gradient variation at the optimal points, which is tighter than the accumulated gradient variation over the whole domain used in existing online optimization studies. We also showed that the tracking error of the proposed algorithm is asymptotically bounded. 
We evaluated the performance of our algorithms using extensive numerical experiments on distributed tracking problems, showed that our algorithm is more robust to choice of step size than that of \cite{shahrampour2018distributed}, and gave a numerical example that gives empirical evidence suggesting that the novel gradient variation measure used in our regret bound cannot in general be removed.


\bibliographystyle{IEEEtran}
\bibliography{biblio}

\section*{Appendix}
{\bf Lemma A.1.} Let Assumption~\ref{assum:DoublyStochasticity} hold and select the stepsize $\alpha$ as
\begin{align}
	\alpha \leq \min\{ \frac{n}{\mu}, \frac{(1 - \sigma_W) (1 - \sqrt{\sigma_W})}{\sigma_W (2 + \sigma_W) + 3\sigma_W \frac{n}{\mu} L_g} \frac{1}{L_g} \}. \nonumber
\end{align}
Then, the spectral radius of the matrix $\Phi(\alpha)$ satisfies $\rho(\Phi(\alpha)) < 1$. Furthermore, all the entries in the matrix $\big( I - \Phi(\alpha)\big)^{-1}$ are upper bounded by $\frac{1}{C_{inv}} \max\{c_{ij}\}$, where 

\begin{align}
	C_{inv} = &\; \frac{\mu}{n}(1-\sigma_W)(1-\sqrt{\sigma_W}) \alpha - \frac{\mu}{n}\sigma_W(2 + \sigma_W) L_g \alpha^2 \nonumber \\
	& - 3\sigma_W L_g^2 \alpha^2, \nonumber
\end{align}
and $c_{ij}$'s are listed in \eqref{eqn:Alpha}.

\begin{proof}
	Using the formula for the inversion of a $3 \times 3$, we get that
	\begin{align}
		(I - \Phi(\alpha))^{-1} = \frac{1}{\text{det}(I - \Phi(\alpha))} C,
	\end{align}
	where $C = [c_{ij}] \in \mathbb{R}^{3 \times 3}$ has entries
	
	\begin{align}\label{eqn:Alpha}
		c_{11} &= (1 - \sqrt{\sigma_W}) (1-\sigma_W) - \sigma_W(2 + \sigma_W) L_g \alpha, \nonumber \\
		c_{12} &= \frac{1}{\sqrt{n}} (1 - \sqrt{\sigma_W}) L_g \alpha, \nonumber \\
		c_{13} &= \frac{1}{\sqrt{n}} \sigma_W L_g \alpha, \nonumber \\
		c_{21} &= 3\sigma_W L_g \alpha, \nonumber \\
		c_{22} &= \frac{\mu}{n} (1 - \sqrt{\sigma_W}) \alpha, \nonumber \\
		c_{23} &= \frac{\mu}{n} \sigma_W \alpha \nonumber \\
		c_{31} &= 3\sqrt{n} (1 - \sigma_W) L_g \alpha, \nonumber \\
		c_{32} &= 3L_g^2 \alpha^2 + \frac{\mu}{n}(2 + \sigma_W) L_g \alpha^2, \nonumber \\
		c_{33} &= \frac{\mu}{n}(1 - \sigma_W) \alpha,
	\end{align}
	and
	
	\begin{align}
		\text{det}(I - \Phi(\alpha)) & = \frac{\mu}{n}(1-\sigma_W)(1-\sqrt{\sigma_W}) \alpha \nonumber \\
		& - \frac{\mu}{n}\sigma_W(2 + \sigma_W) L_g \alpha^2 - 3\sigma_W L_g^2 \alpha^2.
	\end{align}
	Sicne the matrix $\Phi(\alpha)$ is an irreducible $3\times 3$ matrix and all the diagonal entries are strictly smaller than 1, by Lemma 5 in \cite{pu2020distributed},  to guarantee that $\rho(\Phi(\alpha)) < 1$, it is sufficient to let all entries in $\Phi(\alpha)$ and $\text{det}\big( I - \Phi(\alpha)\big)$ greater than $0$. 
	Therefore, we need to select $\alpha$ so that $1 - \frac{\mu}{n} \alpha > 0$, $c_{11} > 0$ and $\text{det}(I - \Phi(\alpha)) > 0$. Solving these inequalities, we get that
	
	\begin{align}
		\alpha \leq \min\bigl\{ \frac{n}{\mu}, \frac{(1 - \sigma_W) (1 - \sqrt{\sigma_W})}{\sigma_W (2 + \sigma_W) + 3\sigma_W \frac{n}{\mu} L_g} \frac{1}{L_g} \bigr\}.
	\end{align}
	
\end{proof}

\begin{IEEEbiography}[{\includegraphics[width=1in,height=1.25in,clip,keepaspectratio]{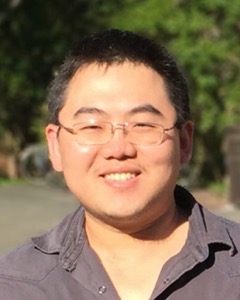}}]{Yan Zhang (S’16)}
received his bachelor’s degree in mechanical engineering from Tsinghua University, Beijing, China in 2014, and his master's degree in mechanical engineering from Duke University, Durham, NC in 2016. Currently, he is pursuing a doctoral degree in mechanical engineering at Duke University, Durham, NC.
His research interests include distributed optimization and distributed reinforcement learning algorithms.
\end{IEEEbiography}

\begin{IEEEbiography}[{\includegraphics[width=1in,height=1.25in,clip,keepaspectratio]{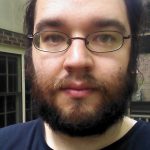}}]{Robert J. Ravier (M’19)}
graduated summa cum laude with a B.A. in Mathematics from Cornell University in 2013 under the supervision of Robert Strichartz. He graduated Duke University with a Ph.D. in Mathematics in 2018 under the supervision of Ingrid Daubechies. During his studies, he received the Kieval Prize for outstanding student in mathematics in 2013 and was awarded an NDSEG Graduate Research Fellowship funded by AFOSR in 2015.

He is currently a Postdoctoral Associate under the supervision of Vahid Tarokh. His research interests stem a wide number of fields and is mostly focused on real world applications of signal processing, optimization, dynamical systems, and statistics. He is the author of the SAMS software package for biological surface analysis, and has assisted in developing quantitative methodologies for partisan gerrymandering that have been at the center of multiple U.S. state and federal court cases, including U.S. Supreme Court cases Gil v. Whitford and Rucho v. Common Cause.
\end{IEEEbiography}

\begin{IEEEbiography}[{\includegraphics[width=1in,height=1.25in,clip,keepaspectratio]{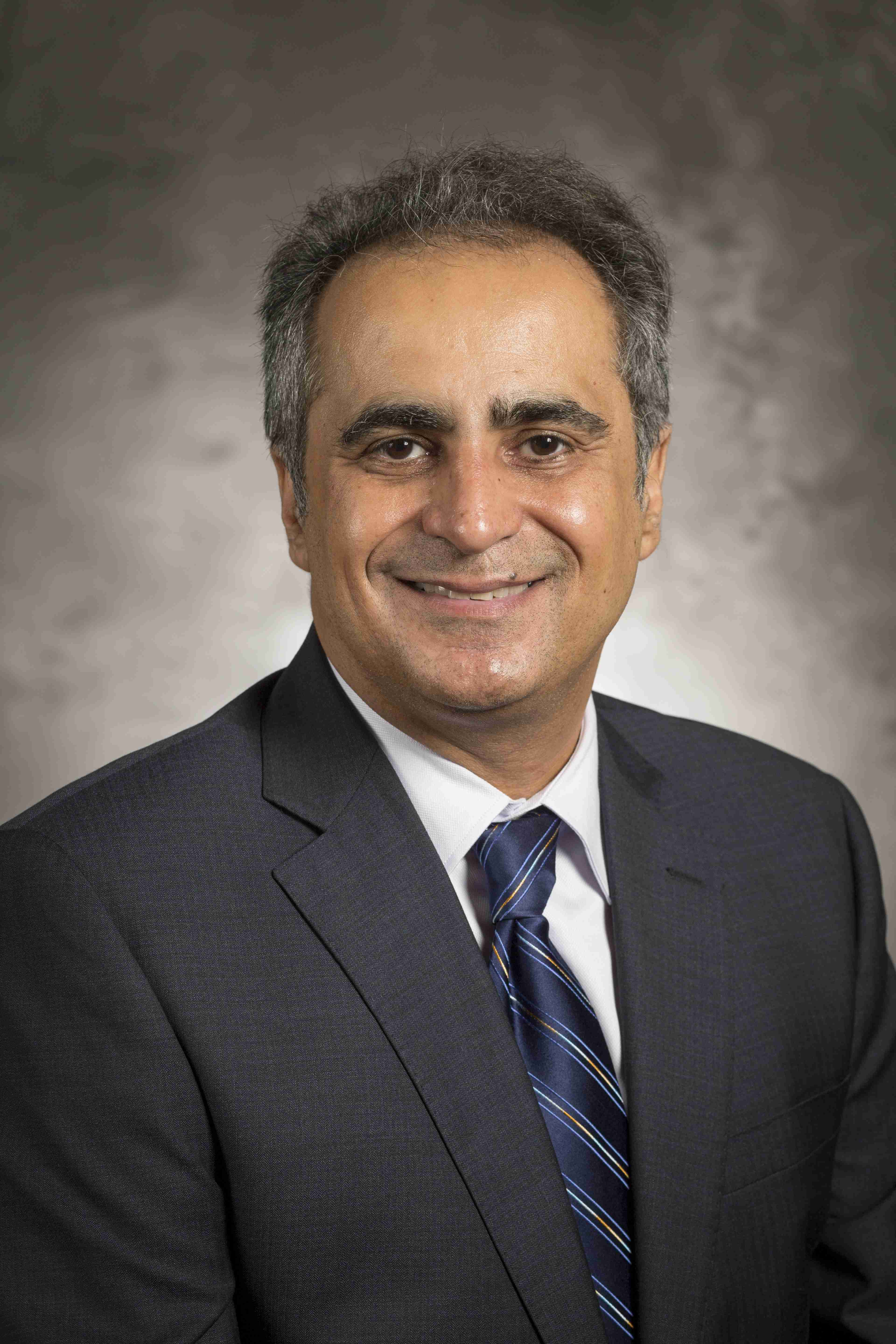}}]{Vahid Tarokh (F’09)}
worked at AT\&T Labs-Research and AT\&T Wireless Services until August 2000 as Member, Principal Member of Technical Staff and, finally, as the Head of the Department of Wireless Communications and Signal Processing. In September 2000, he joined the Massachusetts Institute of Technology (MIT) as an Associate Professor of Electrical Engineering and Computer Science. In June 2002, he joined Harvard University as a Gordon McKay Professor of Electrical Engineering and Hammond Vinton Hayes Senior Research Fellow. He was named Perkins Professor of Applied Mathematics, and Hammond Vinton Hayes Senior Research Fellow of Electrical Engineering in 2005. In Jan 2018, He joined Duke University, as the Rhodes Family Professor of Electrical and Computer Engineering, Bass Connections Endowed Professor, and Professor of Computer Science, and Mathematics. From Jan 2018 to May 2018, He was also a Gordon Moore Distinguished Scholar in the California Institute of Technology (CALTECH).  Since Jan 2019, he has also been named as a Microsoft Data Science Investigator at Duke University.

He has supervised 33 Post-doctoral Fellow and 16 PhD students; over 55\% of these are Professors at Research Universities, and the rest are research scientists at various US Government labs (Lincoln Labs, NASA), and industrial labs. In addition to these, he has supervised 12 M.S. thesis, one undergraduate thesis, and 3 M.S. non-thesis students. In the summer of 2016, Dr. Tarokh supervised 6 High School Summer Student Research on development of tactile gloves and applications, under a US Army HSAP Program.
\end{IEEEbiography}

\begin{IEEEbiography}[{\includegraphics[width=1in,height=1.25in,clip,keepaspectratio]{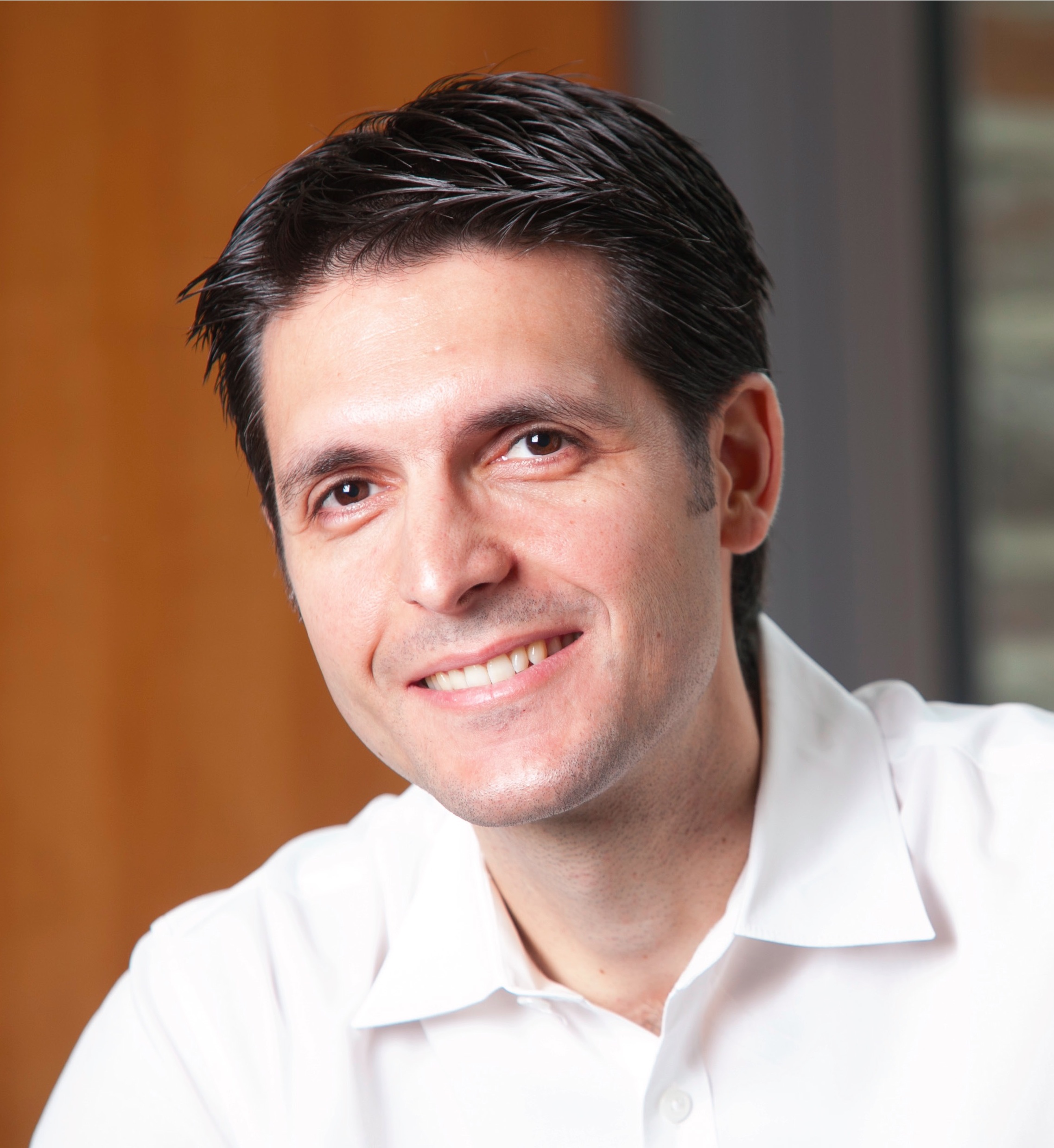}}]{Michael M. Zavlanos (S'05M'09SM'19)}
received the Diploma in mechanical engineering from the National Technical University of Athens (NTUA), Athens, Greece, in 2002, and the M.S.E. and Ph.D. degrees in electrical and systems engineering from the University of Pennsylvania, Philadelphia, PA, in 2005 and 2008, respectively.

He is currently an Associate Professor in the Department of Mechanical Engineering and Materials Science at Duke University, Durham, NC. He also holds a secondary appointment in the Department of Electrical and Computer Engineering and the Department of Computer Science. Prior to joining Duke University, Dr. Zavlanos was an Assistant Professor in the Department of Mechanical Engineering at Stevens Institute of Technology, Hoboken, NJ, and a Postdoctoral Researcher in the GRASP Lab, University of Pennsylvania, Philadelphia, PA. His research focuses on control theory and robotics and, in particular, networked control systems, distributed robotics, cyber-physical systems, and learning for control.

Dr. Zavlanos is a recipient of various awards including the 2014 Naval Research Young Investigator Program (YIP) Award and the 2011 National Science Foundation Faculty Early Career Development (CAREER) Award.
\end{IEEEbiography}
\vfill
%
%
%
%




\end{document}